\documentclass[12pt,reqno]{amsart}

\usepackage{amsfonts,amsmath,amsthm}
\usepackage{amssymb,epsfig}
\usepackage{enumerate} 

\usepackage[text={425pt,650pt},centering]{geometry}
\usepackage{graphicx}
\usepackage{epsfig}
\usepackage{tikz}
\usepackage{caption}
\usepackage{color} 
\usepackage{cite}
\definecolor{vert}{rgb}{0,0.6,0}

\numberwithin{figure}{section}

\theoremstyle{plain}
\newtheorem{thm}{Theorem}[section]

\newtheorem{lem}[thm]{Lemma}

\newtheorem{prop}[thm]{Proposition}
\theoremstyle{remark}
\newtheorem{rem}{\bf{Remark}}
\numberwithin{equation}{section}

\newcommand{\R}{\mathbb{R}}

\newcommand{\T}{\mathbb{T}}

\newcommand{\Lip}{{\rm Lip\,}}

\newcommand{\del}{\delta}
\newcommand{\ep}{\varepsilon}

\newcommand{\sig}{\sigma}

\newcommand{\Del}{\Delta}

\newcommand{\Div}{{\rm div}\,}

\begin{document}
	
	\title[]
	{Convergence Rates for Vanishing Viscosity Approximations of Possibly Degenerate Viscous Hamilton--Jacobi Equations}

	\author[J. Qi]
	{Junkai Qi}
	
	\address[J. Qi]
	{Department of Mathematics, University of Wisconsin--Madison, Madison, WI 53706, USA}
	\email{jqi64@wisc.edu}
	
	\keywords{Hamilton--Jacobi equations; vanishing viscosity; degenerate diffusion; nonlinear adjoint method; convergence rates}
	\subjclass[2020]{
		35F21, 
		35K65, 
		35B40, 
		49L25 
	}
	
	\begin{abstract}
		We study quantitative convergence rates for vanishing viscosity approximations
		of possibly degenerate viscous Hamilton--Jacobi equations on the flat torus.
		The limiting equation contains a spatially dependent diffusion coefficient
		\(a(x)\ge0\), which is allowed to vanish. Under standard structural assumptions
		on the Hamiltonian, we first prove a pointwise
		convergence rate of order \(O(\ep|\log\ep|)\). We then show that, when the
		error is tested against a smooth probability density, the logarithmic loss can
		be removed and an averaged \(O(\ep)\) rate holds. The proof is based on the
		nonlinear adjoint method, weighted Hessian estimates, and entropy estimates for
		the adjoint density.
	\end{abstract}

	\maketitle

	\section{Introduction}
	\label{sec:introduction}
	
	We study quantitative convergence rates for viscous approximations of degenerate viscous Hamilton--Jacobi equations on the flat
	torus \(\T^n\). Let \(T>0\), and consider
	\begin{equation}\label{eq:intro-deg}
		\begin{cases}
			u_t+H(x,Du)=a(x)\Del u,
			& (x,t)\in \T^n\times(0,T],\\[1mm]
			u(x,0)=g(x),
			& x\in\T^n .
		\end{cases}
	\end{equation}
	Here, \(a\) is nonnegative and may vanish. For \(0<\ep<1\), we introduce the
	viscous approximation
	\begin{equation}\label{eq:intro-eps}
		\begin{cases}
			u^\ep_t+H(x,Du^\ep)=(a(x)+\ep)\Del u^\ep,
			& (x,t)\in \T^n\times(0,T],\\[1mm]
			u^\ep(x,0)=g(x),
			& x\in\T^n .
		\end{cases}
	\end{equation}
	Our purpose is to estimate the rate at which \(u^\ep\) converges, as
	\(\ep\to0\), to the viscosity solution \(u\) of \eqref{eq:intro-deg}.
	
	Throughout the paper, we assume that
	\begin{equation}\label{eq:intro-assumptions}
		\left\{
		\begin{aligned}
			&g\in C^2(\T^n),\qquad a\in C^2(\T^n),\qquad a\ge0,\\
			&H\in C^2(\T^n\times\R^n),\qquad
			D^2_{pp}H(x,p)\ge \theta I_n
			\quad\text{for some }\theta>0 .
		\end{aligned}
		\right.
	\end{equation}
	 We also assume the standard growth condition
	\begin{equation}\label{eq:H-str-assumption}
		|D_xH(x,p)|\le C(1+|p|^2),
		\qquad (x,p)\in \T^n\times\R^n .
	\end{equation}
	Under these assumptions, the Bernstein estimate for viscous
	Hamilton--Jacobi equations gives, for every fixed \(T>0\),
	\begin{equation}\label{eq:intro-lipschitz}
		\|Du^\ep\|_{L^\infty(\T^n\times[0,T])}
		+
		\|u^\ep_t\|_{L^\infty(\T^n\times[0,T])}
		\le C,
	\end{equation}
	where \(C\) is independent of \(\ep\); see Cagnetti, Gomes, Mitake, and
	Tran~\cite{CGMT15}. In particular, all derivatives of \(H\) that appear below
	are evaluated on a fixed compact subset of \(\T^n\times\R^n\). Hence their
	bounds can be absorbed into the constants throughout the paper.
	
	By the stability of viscosity solutions, we have
	\[
	u^\ep\to u
	\qquad\text{uniformly on }\T^n\times[0,T].
	\]
	The main question of this paper is therefore the following: once convergence
	is known, how fast does \(u^\ep\) converge to \(u\)?

	\subsection{Background and motivation}
	
	The vanishing viscosity method is a classical approximation procedure for
	Hamilton--Jacobi equations. For first-order Hamilton--Jacobi equations and
	their viscous approximations, well-posedness, stability, and convergence are
	now standard; see, for instance,
	\cite{Lions1982,CrandallLions1984,Tran21}. Under general Lipschitz
	assumptions, the usual \(L^\infty\) convergence rate is \(O(\sqrt\ep)\). Related convergence-rate questions for state--constraint Hamilton--Jacobi
	equations were studied by Han and Tu, who obtained interior
	\(O(\sqrt\ep)\)-type estimates and one-sided improved rates under additional
	assumptions \cite{HanTu2022}.
	
	A powerful tool in the quantitative study of such problems is the nonlinear
	adjoint method. Evans introduced this method for Hamilton--Jacobi equations
	in \cite{Evans2010}. Tran then used adjoint equations to study convergence
	rates for static Hamilton--Jacobi equations \cite{Tran2011}. Later,
	Cagnetti, Gomes, Mitake, and Tran further developed the method to study
	large-time behavior for degenerate viscous Hamilton--Jacobi equations
	\cite{CGMT15}.
	
	Selection problems form another related direction. In the first-order case,
	Davini, Fathi, Iturriaga, and Zavidovique studied the vanishing-discount
	problem and characterized the selected limit by Mather measures
	\cite{DaviniFathiIturriagaZavidovique2016}. In the degenerate viscous case,
	Mitake and Tran obtained an analogous selection result using the nonlinear
	adjoint method and stochastic Mather measures \cite{MitakeTran2017}. More
	recently, Chen and Zhu considered a new selection problem for degenerate
	viscous Hamilton--Jacobi equations involving a nonlinear discount and a small
	potential perturbation \cite{ChenZhu2026}.
	
	Under additional convexity or regularity assumptions, one can improve the
	standard \(O(\sqrt\ep)\) rate. Camilli, Goffi, and Mendico obtained integral
	estimates and \(L^p\)-type convergence rates via the nonlinear adjoint
	method \cite{CamilliGoffiMendico2024}. Chaintron and Daudin proved the sharp
	\(O(\ep|\log\ep|)\) rate for quadratic Hamilton--Jacobi equations
	\cite{ChaintronDaudin2025}, and later extended this type of estimate to
	uniformly convex Hamiltonians under suitable assumptions
	\cite{ChaintronDaudinUniform2025}. Cirant and Goffi obtained \(L^\infty\)
	convergence rates for convex Hamilton--Jacobi equations, including the
	\(O(\ep|\log\ep|)\) rate under appropriate convexity and regularity
	conditions \cite{CirantGoffi2025}. See also the work of Wang and Zhang on
	the vanishing viscosity limit with nearly optimal discount
	\cite{WangZhangDiscount2026}.
	
	Related convergence-rate problems also arise in periodic homogenization.
	Qian, Sprekeler, Tran, and Yu proved the optimal \(O(\sqrt\ep)\) convergence
	rate for periodic homogenization of viscous Hamilton--Jacobi equations under
	general assumptions \cite{QianSprekelerTranYu2024}. More recently, Liu,
	Tran, and Yu proved a sharp global \(O(\ep|\log\ep|)\)-type estimate for
	periodic homogenization of viscous quadratic Hamilton--Jacobi equations,
	together with an \(O(\ep)\) improvement at almost every point under an
	additional semiconcavity assumption \cite{LiuTranYu2026}.
	
	The purpose of the present paper is to develop further the nonlinear adjoint method
	to study finer properties of solutions of degenerate viscous Hamilton--Jacobi equations with spatially dependent
	diffusion. The main difficulty is caused by the term \(a(x)\Delta u^\ep\).
	In the presence of this coefficient, the classical Bernstein argument no
	longer applies directly: after differentiating the equation, one obtains
	terms involving derivatives of \(a\) coupled with higher-order derivatives
	of \(u^\ep\). Such terms do not appear in the first-order case, and they
	cannot be controlled by the standard estimates alone. To handle this
	difficulty, we combine the nonlinear adjoint method with entropy estimates
	for the adjoint density and weighted second-derivative estimates for
	\(u^\ep\). This yields the logarithmic convergence rate in the degenerate
	setting and also gives an averaged \(O(\ep)\)-type estimate for smooth
	terminal densities.
	
	\subsection{Main results}
	
	We now state the two main results of this paper. The first one is a pointwise
	convergence estimate with the logarithmic rate \(O(\ep|\log\ep|)\).
	
	\begin{thm}\label{thm:intro-log-rate}
		Assume \eqref{eq:intro-assumptions} and \eqref{eq:H-str-assumption}. Let
		\(u^\ep\) be the solution of \eqref{eq:intro-eps}, and let \(u\) be the
		viscosity solution of \eqref{eq:intro-deg}. Then there exists a constant
		\(C>0\), independent of \(\ep\), such that
		\vspace{1mm}
		\begin{equation}\label{eq:intro-log-rate}
			\|u^\ep-u\|_{L^\infty(\T^n\times[0,T])}
			\le
			C\ep\bigl(1+|\log\ep|\bigr).
		\end{equation}
	\end{thm}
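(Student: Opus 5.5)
The plan is to use the nonlinear adjoint method to reduce the rate to an estimate on \(\ep\,|\Delta u^\ep|\) integrated against an adjoint density. This follows the strategy of Chaintron--Daudin and Cirant--Goffi, adapted to the variable coefficient \(a(x)\).

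\textbf{Representation of the error.} Differentiate \eqref{eq:intro-eps} in \(\ep\) and set \(v^\ep=\partial_\ep u^\ep\). Then
\[
v^\ep_t+D_pH(x,Du^\ep)\cdot Dv^\ep-(a+\ep)\Delta v^\ep=\Delta u^\ep,\qquad v^\ep(\cdot,0)=0 .
\]
Fix \((x_0,t_0)\) and let \(\sigma^\ep\) solve the adjoint equation backward on \([0,t_0]\):
\[
-\sigma_t-\Div\bigl(D_pH(x,Du^\ep)\sigma\bigr)-\Delta\bigl((a+\ep)\sigma\bigr)=0,\qquad \sigma(\cdot,t_0)=\delta_{x_0}.
\]
Duality gives \(v^\ep(x_0,t_0)=\int_0^{t_0}\!\int_{\T^n}\Delta u^\ep\,\sigma^\ep\,dx\,dt\). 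Since \(\sigma^\ep\ge0\) and \(\int\sigma^\ep=1\), integrating in \(\ep\) from \(0\) (using \(u^\ep\to u\)) reduces the theorem to the bound
\[
\int_0^{t_0}\!\!\int_{\T^n}|\Delta u^\eta|\,\sigma^\eta\,dx\,dt\le C(1+|\log\eta|)\qquad\text{for }\eta\in(0,\ep).
\]
For one side, semiconcavity suffices. The uniform convexity \(D^2_{pp}H\ge\theta I_n\), together with \(a\in C^2\) and \(g\in C^2\), should give a one-sided bound \(D^2u^\eta\le C\), although the \(a(x)\Delta\) term produces commutators that have to be absorbed. This bound already controls the positive part of \(\Delta u^\eta\) by \(C\).

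\textbf{The negative part.} Testing the equation with \(\sigma\) and differentiating the Bernstein quantity \(|Du|^2\) gives a weighted Hessian estimate:
\[
\int_s^{t_0}\!\!\int (a+\eta)|D^2u^\eta|^2\sigma\,dx\,dt\le C\bigl(1+\|Du\|_\infty^2\bigr)+\text{(terms from }Da,D^2a).
\]
The terms involving \(Da\cdot D^2u\,Du\,\sigma\) are handled by Cauchy--Schwarz. The key step uses \(|Da|^2\le Ca\), valid for nonnegative \(a\in C^2\), so these terms are absorbed into \(\int a|D^2u|^2\sigma\). This weighted estimate alone only yields \(\eta^{-1/2}\). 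To get the logarithm, I would combine it with an entropy estimate for \(\sigma\): since \(\sigma\) starts as a Dirac mass, its entropy behaves like \(\frac n2|\log(\eta(t_0-t))|\), and the Fisher information \(\int(a+\eta)|D\sigma|^2/\sigma\) is integrable with the same logarithmic cost. Integrating by parts,
\[
\int \Delta u\,\sigma=-\int Du\cdot D\sigma ,
\]
so one may estimate \(|\int Du\cdot D\sigma|\) near \(t_0\) in terms of the Fisher information, and use the semiconcavity-plus-mass bound away from \(t_0\). Splitting the time interval at \(t_0-\eta\) should give the total bound \(C(1+|\log\eta|)\).

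\textbf{Main obstacle.} The hardest step is this last one in the degenerate region, where the adjoint diffusion is only \(\eta\). There the entropy dissipation controls \(\int(a+\eta)|D\sigma|^2/\sigma\), not \(\int|D\sigma|^2/\sigma\), so the pairing with \(Du\) is not directly controlled. I would instead bound \(\int(-\Delta u)_-\sigma\) using semiconcavity only, which gives \(|\Delta u|\le C+n\,\mathrm{tr}^-\) with \(\int\Delta u\,\sigma\) controlled via the \(\sigma\)-equation. Only the near-terminal layer would use the \(\eta\)-diffusion entropy bound, where its \(|\log\eta|\) cost appears. Integrating the resulting bound \(|v^\eta|\le C(1+|\log\eta|)\) over \(\eta\in(0,\ep)\) then gives \eqref{eq:intro-log-rate}.
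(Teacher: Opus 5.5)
Your reduction matches the paper: the representation $\partial_\ep u^\ep(x_0,\tau)=\int_0^\tau\!\int_{\T^n}\Del u^\ep\,\sig^\ep$ by duality, the weighted estimate $\int\!\int(a+\ep)|D^2u^\ep|^2\sig^\ep\le C$ via $|Da|^2\le Ca$, and the crude treatment of a terminal layer of length $\ep$. However, the step that actually produces the logarithm is missing.

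Semiconcavity, even if granted, gives only $\Del u^\ep\le C$. That yields $\partial_\ep u^\ep\le C$ and the one-sided bound $u^\ep-u\le C\ep$. The pointwise inequality $|\Del u^\ep|\le 2C-\Del u^\ep$ (or your $C+n\,\mathrm{tr}^-$) merely reduces the other side back to a lower bound on $\int\!\int\Del u^\ep\sig^\ep$ over the bulk $[0,\tau-\ep]$. That lower bound is the whole difficulty, and ``controlled via the $\sig$-equation'' supplies no mechanism for it. The route through $-\int Du^\ep\cdot D\sig^\ep$ fails where $a$ vanishes, as you observe, and gives nothing better than $O(\sqrt\ep)$ for $u-u^\ep$.

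The semiconcavity bound is also asserted rather than proved. The equation for $u_{\xi\xi}$ contains $2a_\xi\Del u_\xi+a_{\xi\xi}\Del u$, which is not controlled at a maximum of $u_{\xi\xi}$. The usual control-theoretic proof needs a $C^{1,1}$ square root (or sum-of-squares factorization) of $a$, which $a\in C^2$ does not provide in general. The paper never uses semiconcavity. Finally, the $|\log\ep|$ does not come from the terminal layer, where the crude bound is $O(1)$; it comes from the bulk.

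The missing idea is a second-order Bernstein estimate for $\phi=\Del u^\ep$. It uses $D^2_{pp}H\ge\theta I_n$ to produce a coercive term $\frac\theta2|D^2u^\ep|^2$ that is \emph{not} weighted by $a+\ep$:
\[
L^\ep\phi-2Da\cdot D\phi+\tfrac\theta2|D^2u^\ep|^2\le C .
\]
Set $\chi_\tau(t)=t(\tau-t)$ and pair $\chi_\tau\phi$ with $\sig^\ep$ on $[t_1,t_2]$. Here $t_1\in[\ep,2\ep]$ and $t_2\in[\tau-2\ep,\tau-\ep]$ are selected so that $\int|D\sig^\ep(t_i)|\le C\ep^{-1}(1+|\log\ep|)^{1/2}$, which controls the boundary terms $\chi_\tau(t_i)\int\phi\,\sig^\ep$. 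This gives $\int_{t_1}^{t_2}\!\int\chi_\tau|D^2u^\ep|^2\sig^\ep\le C(1+|\log\ep|)$. Cauchy--Schwarz against $\int_{t_1}^{t_2}dt/\chi_\tau$ then gives the logarithmic bound on $\partial_\ep u^\ep$.

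Your worry about the degenerate region is resolved inside this estimate. The term $2\chi_\tau Da\cdot D\phi\,\sig^\ep$ is integrated by parts. The resulting $\chi_\tau\phi\,Da\cdot D\sig^\ep$ is split by Young's inequality using $|Da|^2\le C(a+\ep)$. Hence only the $(a+\ep)$-weighted Fisher information $\int\!\int(a+\ep)\chi_\tau|D\sig^\ep|^2/\sig^\ep$ is ever needed. Through the entropy identity (where $\Div b^\ep$ contains $D^2u^\ep$), that quantity is bounded by $\del\int\!\int\chi_\tau|D^2u^\ep|^2\sig^\ep+C_\del(1+|\log\ep|)$, and the $\del$-term is absorbed. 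Without this coupled Hessian--Fisher estimate, your argument does not get past $O(\sqrt\ep)$ for $u-u^\ep$.
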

	\vspace{1mm}
	The logarithmic factor can be removed if the error is averaged against a
	smooth probability density. More precisely, let \(r\in C^\infty(\T^n)\)
	satisfy
	\begin{equation}\label{eq:intro-r-density}
		r\ge0,
		\qquad
		\int_{\T^n}r(x)\,dx=1 .
	\end{equation}
	
	\begin{thm}\label{thm:intro-avg-rate}
		Assume \eqref{eq:intro-assumptions} and \eqref{eq:H-str-assumption}. Let \(u^\ep\) be the solution of
		\eqref{eq:intro-eps}, and let \(u\) be the viscosity solution of
		\eqref{eq:intro-deg}. Then there exists a constant \(C>0\), independent of
		\(\ep\) and \(r\), such that
		\vspace{0.5mm}
		\begin{equation}\label{eq:intro-avg-rate}
			\left|
			\int_{\T^n}
			\bigl(u^\ep(x,T)-u(x,T)\bigr)r(x)\,dx
			\right|
			\le
			C\ep
			\left(
			1+\|Dr\|_{L^1(\T^n)}
			+\int_{\T^n}r|\log r|\,dx
			\right)^{1/2}.
		\end{equation}
	\vspace{0.5mm}
		Here \(r|\log r|\) is understood as \(0\) on the set \(\{r=0\}\).
	\end{thm}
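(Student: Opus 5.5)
The plan is the nonlinear adjoint method with terminal density $r$. By the comparison principle, $u^\ep$ converges monotonically in the sense relevant to the rate: differentiating in $\ep$ gives $w=\partial_\ep u^\ep$ solving
\[
w_t+D_pH(x,Du^\ep)\cdot Dw-(a+\ep)\Del w=\Del u^\ep,\qquad w(\cdot,0)=0 .
\]
Let $\sigma^\ep$ solve the adjoint (backward Fokker--Planck) equation
\[
-\sigma_t-\Div\bigl(D_pH(x,Du^\ep)\sigma\bigr)-\Del\bigl((a+\ep)\sigma\bigr)=0,\qquad \sigma(\cdot,T)=r .
\]
Then $\sigma\ge0$ and $\int\sigma(\cdot,t)\,dx=1$, and duality gives
\[
\int_{\T^n}w(x,T)r(x)\,dx=\int_0^T\!\!\int_{\T^n}\Del u^\ep\,\sigma\,dx\,dt .
\]
Since $u^\ep\to u$, integrating in the viscosity parameter from $0$ to $\ep$ reduces \eqref{eq:intro-avg-rate} to the following claim: for each $0<\eta<1$,
\[
\Bigl|\iint \Del u^\eta\,\sigma^\eta\Bigr|\le C\Bigl(1+\|Dr\|_{L^1}+\int r|\log r|\Bigr)^{1/2},
\]
uniformly in $\eta$.

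To prove the claim, I would combine two estimates.

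\textbf{First estimate: weighted Hessian bound.} Differentiate the equation twice and use the uniform convexity $D^2_{pp}H\ge\theta I_n$ together with the Lipschitz bound \eqref{eq:intro-lipschitz}. Testing against $\sigma$ should give
\[
\theta\iint|D^2u^\eta|^2\sigma\le C+\text{(terms with $Da$, $D^2a$)}.
\]
The delicate terms are those such as $Da\cdot D\Del u\,\sigma$. I would integrate these by parts to move the derivative onto $\sigma$, producing $\Del u\,Da\cdot D\sigma$. Then I would use the standard bound $|Da|^2\le C a$, valid for nonnegative $C^2$ functions. By Cauchy--Schwarz this term is bounded by
\[
\delta\iint|D^2u|^2\sigma+C_\delta\iint (a+\eta)\frac{|D\sigma|^2}{\sigma}.
\]

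\textbf{Second estimate: entropy (Fisher information) bound.} Differentiate $\int\sigma\log\sigma$ in time along the adjoint equation. This yields a dissipation term $\iint(a+\eta)|D\sigma|^2/\sigma$ controlled by the terminal entropy $\int r|\log r|$, plus drift contributions. These contributions involve $\Div D_pH(x,Du)$, i.e. $D^2u$ again, and are bounded by $\iint|D^2u|\sigma$. Two further pieces need care. The initial-time entropy needs a lower bound, which holds on the torus since $\int\sigma\log\sigma\ge0$. The $\|Dr\|_{L^1}$ term enters from the terminal boundary contribution when integrating the term $Da\cdot D\sigma$ in time.

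Coupling the two inequalities and absorbing gives
\[
\iint|D^2u|^2\sigma+\iint(a+\eta)\frac{|D\sigma|^2}{\sigma}\le C\Bigl(1+\|Dr\|_{L^1}+\int r|\log r|\Bigr).
\]
Cauchy--Schwarz with $\iint\sigma=T$ then bounds $\iint|\Del u|\sigma$ by the square root, which is exactly the claim.

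The main obstacle is closing this coupled system. The entropy production contains $D^2u$ weighted by $\sigma$, while the Hessian estimate contains the Fisher information weighted by $a+\eta$. One must check that the $a$-weights match; the weight $|Da|^2\le Ca$ is precisely what makes them match. One must also check that the constants do not degenerate as $\eta\to0$, in particular where $a=0$, since there the only diffusion comes from the $\eta$ term, which is harmless.
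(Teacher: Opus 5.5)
You follow the paper's route: duality with the backward Fokker--Planck density started from $r$, the Bernstein inequality for $\phi=\Delta u^\eta$ tested against $\sigma^\eta$, spatial integration by parts of $Da\cdot D\phi\,\sigma^\eta$ with $|Da|^2\le Ca\le C(a+\eta)$ to reach the weighted Fisher information, the entropy identity bounding that Fisher information by $\int r|\log r|$, absorption, and Cauchy--Schwarz. But your first estimate drops the time-boundary terms of the duality pairing, and one of them is essential. Pairing $L^\eta\phi-2Da\cdot D\phi+\frac\theta2|D^2u^\eta|^2\le C$ with $\sigma^\eta$ produces two such terms. The first, $\int\Delta g\,\sigma^\eta(x,0)\,dx$, is harmless: it is at most $\|\Delta g\|_{L^\infty}$. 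The second, $-\int\Delta u^\eta(x,T)r(x)\,dx$, is \emph{not} bounded independently of $r$ and $\eta$, because $\Delta u^\eta$ has no uniform lower bound.

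So your inequality $\theta\iint|D^2u|^2\sigma\le C+(\text{terms with }Da,D^2a)$, with $C$ independent of $r$, is false. Already for $a\equiv0$ and $H=\frac12|p|^2$ the pairing is the exact identity $\iint|D^2u^\eta|^2\sigma^\eta=\int\Delta g\,\sigma^\eta(x,0)\,dx-\int\Delta u^\eta(x,T)r\,dx$. The last term is of order $1/\eta$ when $r$ concentrates near a viscous shock, where $\Delta u^\eta(\cdot,T)$ is of order $-1/\eta$. The missing step is to integrate this term by parts onto $r$: $-\int\Delta u^\eta(x,T)r\,dx=\int Du^\eta(x,T)\cdot Dr\,dx\le\|Du^\eta\|_{L^\infty}\|Dr\|_{L^1}$. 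This, and only this, is where $\|Dr\|_{L^1}$ enters the theorem.

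The source you give for $\|Dr\|_{L^1}$, a terminal contribution from integrating $Da\cdot D\sigma$ in time, does not exist. In the entropy computation the only time-boundary terms are $E(T)=\int r\log r$ and $E(0)\ge -C$. The term $\int(b^\eta+Da)\cdot D\sigma\,dx$ is integrated in $t$ with no time integration by parts. After spatial integration by parts it becomes $-\int(\Div b^\eta+\Delta a)\sigma\,dx$; even at $t=T$, $\int Da\cdot Dr=-\int\Delta a\,r$ is bounded by $\|\Delta a\|_{L^\infty}$. So the entropy step contributes only $\int r|\log r|$.

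Your final inequality has the right form, but for the wrong reason. Once the $\|Dr\|_{L^1}$ term is placed in the terminal term of the Hessian estimate, the coupled system closes exactly as in the paper. Separately, your opening remark that $u^\ep$ converges ``monotonically'' is unsupported, since $\Delta u^\ep$ has no sign, but you never use it.
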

	
	To the best of our knowledge, both Theorems~\ref{thm:intro-log-rate}
	and~\ref{thm:intro-avg-rate} are new in the literature for degenerate viscous
	Hamilton--Jacobi equations.
	
	\subsection{Idea of the proof}
	
	We briefly describe the mechanism of the proof. Differentiating
	\eqref{eq:intro-eps} with respect to \(\ep\), we obtain a linearized equation
	for \(w^\ep=\partial_\ep u^\ep\), whose source term is \(\Del u^\ep\). The
	nonlinear adjoint method then reduces the convergence-rate problem to
	weighted estimates for \(\Del u^\ep\).
	
	The basic adjoint estimate only gives the standard \(O(\sqrt\ep)\) rate. To
	obtain the logarithmic rate, we develop an argument in the spirit of Cirant
	and Goffi \cite{CirantGoffi2025}, but adapted to the degenerate viscous
	setting. The main new difficulty is the spatially dependent diffusion
	\(a(x)\), which produces additional \(Da\)-terms after differentiation. These
	terms are handled by combining the structure inequality
	\[
	|Da|^2\le C a
	\]
	with entropy and Fisher-information estimates for the adjoint density. This
	yields the key weighted second-derivative estimate and leads to the
	\(O(\ep|\log\ep|)\) pointwise convergence rate.
	
	For the averaged estimate, we replace the terminal Dirac mass in the adjoint
	equation by a smooth probability density \(r\). This removes the endpoint
	singularity of the adjoint density. As a result, the logarithmic loss
	disappears, and we obtain an averaged \(O(\ep)\) estimate.
	
	The paper is organized as follows. In
	Section~\ref{sec:degenerate-log-rate}, we prove
	Theorem~\ref{thm:intro-log-rate}. In Section~\ref{sec:averaged-estimate}, we
	prove Theorem~\ref{thm:intro-avg-rate}.
	
	\section{Pointwise estimates in the degenerate case}
	\label{sec:degenerate-log-rate}
	
	In this section we prove Theorem~\ref{thm:intro-log-rate}. Throughout this
	section we write
	\[
	A^\ep(x):=a(x)+\ep,
	\qquad
	b^\ep(x,t):=D_pH(x,Du^\ep(x,t)).
	\]
	We use the notation
	\begin{equation}\label{eq:deg-L-eps}
		L^\ep f:=f_t+b^\ep\cdot Df-A^\ep\Del f .
	\end{equation}
	
	For \(0<\tau\le T\) and \(x_0\in\T^n\), let \(\sig^\ep\) solve the adjoint
	equation
	\begin{equation}\label{eq:deg-adjoint-dirac}
		\begin{cases}
			-\sig^\ep_t-\Div(b^\ep\sig^\ep)-\Del(A^\ep\sig^\ep)=0,
			& (x,t)\in \T^n\times(0,\tau),\\[1mm]
			\sig^\ep(\cdot,\tau)=\delta_{x_0},
			& \text{in the sense of measures on } \T^n .
		\end{cases}
	\end{equation}
	Here, \(\delta_{x_0}\) is the Dirac measure at \(x_0\). By the maximum
	principle and the conservation of mass, we have
	\begin{equation}\label{eq:deg-sigma-basic}
		\sig^\ep\ge0,
		\qquad
		\int_{\T^n}\sig^\ep(x,t)\,dx=1
		\quad\text{for }0\le t<\tau .
	\end{equation}
	Moreover, the following duality identity holds:
	\begin{equation}\label{eq:deg-duality-identity}
		f(x_0,\tau)
		-
		\int_{\T^n}f(x,0)\sig^\ep(x,0)\,dx
		=
		\int_0^\tau\int_{\T^n}L^\ep f\,\sig^\ep\,dx\,dt .
	\end{equation}
	
	We shall use the following elementary consequence of the degeneracy condition
	\(a\ge0\).
	
	\begin{lem}\label{lem:Da-structure}
		There exists a constant \(C>0\) such that
		\begin{equation}\label{eq:Da-structure}
			|Da(x)|^2\le C a(x)
			\qquad\text{for all }x\in\T^n .
		\end{equation}
	\end{lem}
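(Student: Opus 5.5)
This is the classical Glaeser-type inequality for nonnegative $C^2$ functions (in the spirit of the Oleinik--Radkevich argument), and we prove it by a Taylor expansion along the direction of the gradient. Set $M:=\|D^2a\|_{L^\infty(\T^n)}$, which is finite because $a\in C^2(\T^n)$ and $\T^n$ is compact. Since $a$ is $\mathbb{Z}^n$-periodic, we may regard it as a $C^2$ function on all of $\R^n$. It then satisfies $a\ge0$ everywhere and has second derivatives bounded by $M$ globally. This is what lets us move freely along straight lines without boundary issues.

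Fix $x\in\T^n$. If $Da(x)=0$ there is nothing to prove, so assume $Da(x)\neq0$ and put $e:=Da(x)/|Da(x)|$. For $s\in\R$, Taylor's formula with remainder gives
\[
0\le a(x-se)\le a(x)-s|Da(x)|+\frac{M}{2}s^2 .
\]
Hence, for every $s>0$,
\[
|Da(x)|\le \frac{a(x)}{s}+\frac{M}{2}s .
\]

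If $M=0$, then $a$ is affine on $\R^n$ and bounded (by periodicity), hence constant. In that case $Da\equiv0$ and the claim is trivial. If $M>0$ and $a(x)>0$, we take $s=\sqrt{2a(x)/M}$, which yields $|Da(x)|\le\sqrt{2Ma(x)}$, that is, $|Da(x)|^2\le 2Ma(x)$. If $a(x)=0$, letting $s\to0^+$ gives $|Da(x)|\le0$, which is consistent: $x$ is then a minimum point of $a$, so $Da(x)=0$. Therefore \eqref{eq:Da-structure} holds with $C=2\|D^2a\|_{L^\infty}$, or any positive constant when $M=0$.

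No step here is a serious obstacle. The only point requiring care is that the Taylor expansion must stay in the region where $a\ge0$ and $|D^2a|\le M$. On a bounded domain this would restrict the admissible $s$. On the torus, periodicity removes this restriction entirely, which is precisely why the lemma needs no further hypothesis on $a$.
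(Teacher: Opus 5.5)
Your argument is correct, and it differs from the paper's mainly in what is taken as known. The paper invokes the standard fact that $a^{1/2}\in\Lip(\T^n)$ for nonnegative $a\in C^2(\T^n)$. It then writes $|Da|^2=4a\,|D(a^{1/2})|^2$ on $\{a>0\}$ and notes that $Da=0$ at zeros of $a$, since these are minima.

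You instead prove the inequality directly, by a second-order Taylor expansion along $-Da(x)/|Da(x)|$ followed by optimizing in $s$. This is precisely the usual proof of the Lipschitz fact the paper cites: on $\{a>0\}$ your bound gives $|D(a^{1/2})|=|Da|/(2a^{1/2})\le(\|D^2a\|_{L^\infty}/2)^{1/2}$.

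The paper's route is shorter but black-boxes the only nontrivial step. Yours is self-contained and gives the explicit constant $C=2\|D^2a\|_{L^\infty}$. It uses only that $Da$ is Lipschitz, so it also covers $a\in C^{1,1}$. It also makes visible where the absence of boundary on $\T^n$ enters: the optimal step $s=\sqrt{2a(x)/M}$ is always admissible.

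A minor simplification: the separate case $M=0$ is unnecessary. Letting $s\to\infty$ in $|Da(x)|\le a(x)/s$ already gives $Da(x)=0$.
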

	
	\begin{proof}
		Since \(a\ge0\) and \(a\in C^2(\T^n)\), it is standard that \(a^{1/2}\in
		\Lip(\T^n)\). Hence, at every point where \(a(x)>0\),
		\[
		D(a^{1/2})(x)=\frac{Da(x)}{2a^{1/2}(x)}.
		\]
		Therefore
		\[
		|Da(x)|^2
		=
		4a(x)|D(a^{1/2})(x)|^2
		\le C a(x)
		\qquad\text{on }\{a>0\}.
		\]
		The estimate is trivial on \(\{a=0\}\), since every zero point of \(a\) is a
		minimum point and hence \(Da=0\) there.
	\end{proof}
	
	We first prove the standard estimate. It will later be used to control the
	short time intervals near the endpoints.
	
	\begin{thm}\label{thm:deg-sqrt-rate}
		There exists a constant \(C>0\), independent of \(\ep\), such that
		\begin{equation}\label{eq:deg-sqrt-rate}
			\|u^\ep-u\|_{L^\infty(\T^n\times[0,T])}
			\le C\sqrt\ep .
		\end{equation}
	\end{thm}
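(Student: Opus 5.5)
Following the strategy sketched in the introduction, I will differentiate in \(\ep\) and use the adjoint method. Set \(w^\ep:=\partial_\ep u^\ep\). Differentiating \eqref{eq:intro-eps} in \(\ep\) gives
\[
L^\ep w^\ep=\Del u^\ep,\qquad w^\ep(\cdot,0)=0 .
\]
Fix \((x_0,\tau)\), let \(\sig^\ep\) solve \eqref{eq:deg-adjoint-dirac}, and apply the duality identity \eqref{eq:deg-duality-identity} with \(f=w^\ep\). This yields
\[
|w^\ep(x_0,\tau)|\le\int_0^\tau\!\!\int_{\T^n}|\Del u^\ep|\,\sig^\ep\,dx\,dt .
\]
If I can show that the right-hand side is at most \(C\ep^{-1/2}\), then integrating in \(\ep\) gives \(|u^\ep-u^{\ep'}|\le C(\sqrt\ep-\sqrt{\ep'})\) for \(\ep'<\ep\). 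Letting \(\ep'\to0\) and using the uniform convergence \(u^{\ep'}\to u\) then yields \eqref{eq:deg-sqrt-rate}. (Smoothness of \(u^\ep\) in \(\ep\) for \(\ep>0\) is standard, since the equation is uniformly parabolic.)

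The key step is a weighted Hessian bound:
\[
\ep\int_0^\tau\!\!\int_{\T^n}|D^2u^\ep|^2\sig^\ep\,dx\,dt\le C .
\]
Combined with Cauchy--Schwarz and \(\int\sig^\ep=1\), this gives \(\int\!\!\int|\Del u^\ep|\sig^\ep\le C\tau^{1/2}\ep^{-1/2}\), as needed. To prove it, I will apply \eqref{eq:deg-duality-identity} to \(f=\phi:=|Du^\ep|^2/2\). Differentiating \eqref{eq:intro-eps} in \(x_k\), multiplying by \(u^\ep_{x_k}\) and summing gives
\[
L^\ep\phi=-A^\ep|D^2u^\ep|^2-D_xH\cdot Du^\ep+\Del u^\ep\,Da\cdot Du^\ep .
\]
The left side of the duality identity is bounded by \eqref{eq:intro-lipschitz}, and the \(D_xH\) term is bounded by \eqref{eq:H-str-assumption} together with the Lipschitz bound. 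The remaining task is to control the \(Da\) term.

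This \(Da\) term is where I expect the main obstacle. The plan is to combine Lemma~\ref{lem:Da-structure} with Young's inequality:
\[
|\Del u^\ep\,Da\cdot Du^\ep|\le C\,a^{1/2}|D^2u^\ep|\le \tfrac12 a|D^2u^\ep|^2+C\le\tfrac12A^\ep|D^2u^\ep|^2+C .
\]
The first half is absorbed into the good term \(-A^\ep|D^2u^\ep|^2\). What remains is
\[
\int_0^\tau\!\!\int_{\T^n}A^\ep|D^2u^\ep|^2\sig^\ep\,dx\,dt\le C,
\]
and since \(A^\ep\ge\ep\), this contains the claimed bound. In this direction, therefore, the degeneracy costs nothing.

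For the lower bound I will use symmetric reasoning, since the estimate on \(|w^\ep|\) already handles both signs. As a consistency check, an alternative would be a one-sided comparison using the semiconcavity of \(u^\ep\), but that is not needed. All constants depend only on \(T\), \(\theta\), \(\|a\|_{C^2}\), \(\|g\|_{C^2}\) and the bounds on \(H\) over the compact set fixed by \eqref{eq:intro-lipschitz}.
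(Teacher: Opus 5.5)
Your proposal is correct and follows the paper's proof essentially step for step. You use the energy identity for \(\frac12|Du^\ep|^2\) with the \(Da\)-term absorbed via \(|Da|^2\le Ca\) (the paper's Lemma~\ref{lem:deg-basic-D2}), then the adjoint representation of \(\partial_\ep u^\ep\), Cauchy--Schwarz, and integration in \(\ep\). The only cosmetic difference is that you invoke \(A^\ep\ge\ep\) before Cauchy--Schwarz, whereas the paper pairs \(A^\ep|D^2u^\ep|^2\sig^\ep\) with \(\sig^\ep/A^\ep\); both give the same \(C/\sqrt\ep\) bound.
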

	
	The proof relies on the following weighted estimate for \(D^2u^\ep\).
	
	\begin{lem}\label{lem:deg-basic-D2}
		Let \(\sig^\ep\) solve \eqref{eq:deg-adjoint-dirac}. Then
		\begin{equation}\label{eq:deg-basic-D2}
			\int_0^\tau\int_{\T^n}
			A^\ep |D^2u^\ep|^2\sig^\ep\,dx\,dt
			\le C .
		\end{equation}
	\end{lem}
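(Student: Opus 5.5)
We apply the duality identity \eqref{eq:deg-duality-identity} to the function
\[
f:=\tfrac12|Du^\ep|^2=\tfrac12\sum_k(u^\ep_{x_k})^2 .
\]
The aim is to show that \(L^\ep f\) contains the good term \(A^\ep|D^2u^\ep|^2\) with a definite sign. Since \(f\) is bounded uniformly in \(\ep\) by \eqref{eq:intro-lipschitz}, the left-hand side of the identity is \(O(1)\). The remaining terms of \(L^\ep f\) must be controlled by a small multiple of \(A^\ep|D^2u^\ep|^2\) plus bounded quantities. Since \(\sig^\ep\) is a probability density, bounded quantities integrate to \(O(\tau)\le O(T)\).

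\textbf{Differentiating the equation.} Differentiate \eqref{eq:intro-eps} in \(x_k\), writing \(v_k=u^\ep_{x_k}\):
\[
(v_k)_t+b^\ep\cdot Dv_k+H_{x_k}(x,Du^\ep)=A^\ep\Del v_k+a_{x_k}\Del u^\ep .
\]
Multiply by \(v_k\), sum over \(k\), and use
\[
\Del f=\sum_k v_k\Del v_k+|D^2u^\ep|^2 .
\]
This gives
\[
L^\ep f=-A^\ep|D^2u^\ep|^2-D_xH\cdot Du^\ep+\Del u^\ep\,(Da\cdot Du^\ep).
\]
The term \(D_xH\cdot Du^\ep\) is bounded by \eqref{eq:H-str-assumption} and \eqref{eq:intro-lipschitz}.

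\textbf{The \(Da\)-term (the key point).} By Lemma \ref{lem:Da-structure}, \(|Da|\le C a^{1/2}\le C(A^\ep)^{1/2}\). Using \(|\Del u^\ep|\le \sqrt n\,|D^2u^\ep|\) and Young's inequality,
\[
|\Del u^\ep\,Da\cdot Du^\ep|\le C(A^\ep)^{1/2}|D^2u^\ep|\le \tfrac12A^\ep|D^2u^\ep|^2+C .
\]
Hence
\[
L^\ep f\le -\tfrac12A^\ep|D^2u^\ep|^2+C .
\]
Without the structure inequality this term could not be absorbed, because \(a\) may vanish where \(Da\) does not. So this is the only nontrivial step, and Lemma \ref{lem:Da-structure} resolves it.

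\textbf{Conclusion.} Insert the bound into \eqref{eq:deg-duality-identity}:
\[
\tfrac12\int_0^\tau\!\!\int A^\ep|D^2u^\ep|^2\sig^\ep
\le \int_{\T^n} f(x,0)\sig^\ep(x,0)\,dx-f(x_0,\tau)+C\tau\le C,
\]
using \(\|f\|_\infty\le C\) and \(\int\sig^\ep=1\). This proves \eqref{eq:deg-basic-D2}.

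Two regularity points should be checked. First, \(u^\ep\) is smooth enough, say \(C^{3}\) in \(x\), for differentiation; if not, one mollifies \(H\), \(a\) and \(g\) and passes to the limit, and the constants do not change. Second, the duality identity must be justified for the Dirac terminal datum, for example by approximating \(\delta_{x_0}\) by smooth densities. With the constants independent of the approximation, the bound passes to the limit by Fatou's lemma.
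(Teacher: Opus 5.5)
Your proposal is correct and follows essentially the same route as the paper. You apply the duality identity to \(\tfrac12|Du^\ep|^2\), derive the identity \(L^\ep f=-A^\ep|D^2u^\ep|^2-D_xH\cdot Du^\ep+(Da\cdot Du^\ep)\Del u^\ep\), absorb the \(Da\)-term using \(|Da|^2\le C A^\ep\) and Young's inequality, and pair with \(\sig^\ep\). Your closing remarks on regularity and on approximating the Dirac terminal datum are reasonable additions that the paper leaves implicit.
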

	
	\begin{proof}
		Differentiate \eqref{eq:intro-eps} with respect to \(x_i\). We obtain
		\begin{equation}\label{eq:deg-xi-eq}
			u^\ep_{x_i t}
			+
			H_{x_i}(x,Du^\ep)
			+
			H_{p_j}(x,Du^\ep)u^\ep_{x_i x_j}
			=
			A^\ep\Del u^\ep_{x_i}
			+
			a_{x_i}\Del u^\ep .
		\end{equation}
		Multiplying by \(u^\ep_{x_i}\), summing over \(i\), and using
		\[
		\sum_{i=1}^n u^\ep_{x_i}\Del u^\ep_{x_i}
		=
		\Del\left(\frac12|Du^\ep|^2\right)-|D^2u^\ep|^2,
		\]
		we get
		\begin{equation}\label{eq:deg-gradient-energy-identity}
			L^\ep\left(\frac12|Du^\ep|^2\right)
			+
			A^\ep |D^2u^\ep|^2
			=
			-D_xH(x,Du^\ep)\cdot Du^\ep
			+
			(Da\cdot Du^\ep)\Del u^\ep .
		\end{equation}
		Since \(A^\ep=a+\ep\) and \(|Da|^2\le Ca\), we have
		\[
		\frac{|Da|^2}{A^\ep}\le C.
		\]
		Then, by \eqref{eq:intro-lipschitz} and Young's inequality,
		\[	\left|(Da\cdot Du^\ep)\Del u^\ep\right|
		\le
		\frac12 A^\ep |D^2u^\ep|^2+C .\]
		Therefore
		\begin{equation}\label{eq:deg-gradient-energy}
			L^\ep\left(\frac12|Du^\ep|^2\right)
			+
			\frac12 A^\ep |D^2u^\ep|^2
			\le C .
		\end{equation}
		Pairing this inequality with \(\sig^\ep\) and using
		\eqref{eq:deg-duality-identity}, we find
\[
\begin{gathered}
	\frac12
	\int_0^\tau\int_{\T^n}
	A^\ep |D^2u^\ep|^2\sig^\ep\,dx\,dt \\
	\le
	C\tau
	+
	\frac12\int_{\T^n}
	|Du^\ep(x,0)|^2\sig^\ep(x,0)\,dx
	-
	\frac12|Du^\ep(x_0,\tau)|^2
	\le C .
\end{gathered}
\]
		The lemma follows.
	\end{proof}
	
	\begin{proof}[Proof of Theorem~\ref{thm:deg-sqrt-rate}]
		Let
		\[
		w^\ep:=\partial_\ep u^\ep .
		\]
		Differentiating \eqref{eq:intro-eps} with respect to \(\ep\), we obtain
		\begin{equation}\label{eq:deg-weps-eq}
			\begin{cases}
				L^\ep w^\ep=\Del u^\ep,
				& (x,t)\in\T^n\times(0,\tau),\\[1mm]
				w^\ep(x,0)=0,
				& x\in\T^n .
			\end{cases}
		\end{equation}
		By \eqref{eq:deg-duality-identity},
		\begin{equation}\label{eq:deg-weps-adjoint}
			w^\ep(x_0,\tau)
			=
			\int_0^\tau\int_{\T^n}
			\Del u^\ep\,\sig^\ep\,dx\,dt .
		\end{equation}
		Hence, by Lemma~\ref{lem:deg-basic-D2}, \(A^\ep\ge \ep\), and
		\eqref{eq:deg-sigma-basic},
		\begin{equation}\label{eq:deg-weps-sqrt-bound}
			\begin{gathered}
				|w^\ep(x_0,\tau)|
				\le C\int_0^\tau\int_{\T^n}|D^2u^\ep|\sig^\ep\,dx\,dt  \\
				\le C
				\left(
				\int_0^\tau\int_{\T^n}
				A^\ep |D^2u^\ep|^2\sig^\ep\,dx\,dt
				\right)^{1/2}
				\left(
				\int_0^\tau\int_{\T^n}
				\frac{\sig^\ep}{A^\ep}\,dx\,dt
				\right)^{1/2} \le \frac{C}{\sqrt{\ep}} .
			\end{gathered}
		\end{equation}
	    \vspace{0.5mm}
		Indeed,
		\[
		\int_0^\tau\int_{\T^n}
		\frac{\sig^\ep}{A^\ep}\,dx\,dt
		\le
		\frac1\ep\int_0^\tau\int_{\T^n}\sig^\ep\,dx\,dt
		\le \frac{C}{\ep}.
		\]
		Since \(x_0\) and \(\tau\) are arbitrary,
		\begin{equation}\label{eq:deg-weps-sqrt}
			\|\partial_\ep u^\ep\|_{L^\infty(\T^n\times[0,T])}
			\le
			\frac{C}{\sqrt\ep}.
		\end{equation}
		For \(0<\eta<\ep\),
		\[
		u^\ep-u^\eta
		=
		\int_\eta^\ep \partial_su^s\,ds .
		\]
		Thus
		\[
		\|u^\ep-u^\eta\|_{L^\infty(\T^n\times[0,T])}
		\le
		\int_\eta^\ep \frac{C}{\sqrt s}\,ds
		\le
		C\sqrt\ep .
		\]
		Letting \(\eta\to0\) and using the uniform convergence \(u^\eta\to u\), we
		obtain \eqref{eq:deg-sqrt-rate}.
	\end{proof}
	
	We now derive the entropy estimates for the adjoint density.
	
	\begin{lem}\label{lem:deg-entropy}
		Let \(0<\ep<1\), \(4\ep<\tau\le T\), and let \(\sig^\ep\) solve
		\eqref{eq:deg-adjoint-dirac}. Then there exist
		\[
		t_1\in[\ep,2\ep],
		\qquad
		t_2\in[\tau-2\ep,\tau-\ep],
		\]
		such that
		\begin{equation}\label{eq:deg-Dsigma-endpoint}
			\int_{\T^n}|D\sig^\ep(x,t_i)|\,dx
			\le
			\frac{C}{\ep}\bigl(1+|\log\ep|\bigr)^{1/2},
			\qquad i=1,2.
		\end{equation}
		Moreover, with
		\[
		\chi_\tau(t):=t(\tau-t),
		\]
		for every \(\del>0\),
		\begin{equation}\label{eq:deg-weighted-fisher}
			\begin{aligned}
				\int_{t_1}^{t_2}\int_{\T^n}
				A^\ep\chi_\tau(t)\frac{|D\sig^\ep|^2}{\sig^\ep}\,dx\,dt
				&\le
				\del
				\int_{t_1}^{t_2}\int_{\T^n}
				\chi_\tau(t)|D^2u^\ep|^2\sig^\ep\,dx\,dt        \\
				&\quad
				+
				C_\del\bigl(1+|\log\ep|\bigr).
			\end{aligned}
		\end{equation}
	\end{lem}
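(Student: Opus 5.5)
We compute the time derivative of the entropy $E(t):=\int_{\T^n}\sig^\ep\log\sig^\ep\,dx$ along the adjoint equation, then integrate it in time twice: first without a weight, to obtain \eqref{eq:deg-Dsigma-endpoint}, and then with the weight $\chi_\tau$, to obtain \eqref{eq:deg-weighted-fisher}.

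\textbf{The entropy identity.} The adjoint equation is backward in time. Using $\int\sig^\ep_t=0$, we get
\[
-\frac{d}{dt}E(t)=\int_{\T^n}\bigl(\Div(b^\ep\sig^\ep)+\Del(A^\ep\sig^\ep)\bigr)\log\sig^\ep\,dx .
\]
Integrating by parts, the drift term becomes $-\int b^\ep\cdot D\sig^\ep=\int \Div b^\ep\,\sig^\ep$, with $\Div b^\ep=H_{x_ip_i}+H_{p_ip_j}u^\ep_{x_ix_j}$. Writing $D(A^\ep\sig^\ep)=A^\ep D\sig^\ep+\sig^\ep Da$, the diffusion term equals
\[
-\int A^\ep\frac{|D\sig^\ep|^2}{\sig^\ep}-\int Da\cdot D\sig^\ep .
\]
For the last term we use $|Da\cdot D\sig^\ep|\le \tfrac12 A^\ep|D\sig^\ep|^2/\sig^\ep+\tfrac12(|Da|^2/A^\ep)\sig^\ep$, and Lemma~\ref{lem:Da-structure} gives $|Da|^2/A^\ep\le C$. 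Hence
\begin{equation}\label{eq:plan-entropy}
\frac{d}{dt}E(t)\ge \frac12\int_{\T^n}A^\ep\frac{|D\sig^\ep|^2}{\sig^\ep}\,dx-C\int_{\T^n}|D^2u^\ep|\sig^\ep\,dx-C .
\end{equation}

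\textbf{Bounds on the entropy.} On the torus, Jensen's inequality gives $E\ge0$ from below. For the upper bound $E(t)\le C(1+|\log(\tau-t)|)$, I would use a Gaussian-type sup bound $\sig^\ep(\cdot,t)\le C(\ep(\tau-t))^{-n/2}$ for the fundamental solution of a uniformly parabolic operator with diffusion $\ge\ep$ and bounded coefficients. For $\tau-t\ge\ep$ this gives $E(t)\le C(1+|\log\ep|)$.

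This upper bound is the main obstacle. The drift term involves $D^2u^\ep$, which is only controlled in weighted $L^2$, and the diffusion coefficient has derivatives. A cleaner route is to take the upper bound of $E$ only at times $\tau-t\in[\ep,2\ep]$, where the density is still concentrated at scale $\sqrt{\ep\cdot\ep}=\ep$. Nash-type estimates for divergence-form operators, after rewriting $\Del(A^\ep\sig)=\Div(A^\ep D\sig+\sig Da)$, should give $\|\sig^\ep(\cdot,t)\|_\infty\le C\ep^{-n}$ there, so $E\le C(1+|\log\ep|)$. Near $t=0$ the density is spread out, and since $t\le\tau$ the same bound holds, because the entropy is essentially monotone by \eqref{eq:plan-entropy} up to controlled errors.

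\textbf{Step 1: the endpoint bound \eqref{eq:deg-Dsigma-endpoint}.} Integrate \eqref{eq:plan-entropy} over $[\tau-2\ep,\tau-\ep]$. Since $A^\ep\ge\ep$, this gives
\[
\int_{\tau-2\ep}^{\tau-\ep}\int \frac{|D\sig^\ep|^2}{\sig^\ep}\le \frac{C}{\ep}\Bigl(1+|\log\ep|+\int_{\tau-2\ep}^{\tau-\ep}\!\!\int|D^2u^\ep|\sig^\ep\Bigr).
\]
The last integral is bounded by Cauchy--Schwarz and Lemma~\ref{lem:deg-basic-D2}: it is at most $C(\ep\cdot\ep^{-1})^{1/2}=C$. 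By the mean value theorem we may choose $t_2$ with $\int|D\sig^\ep|^2/\sig^\ep\le C\ep^{-2}(1+|\log\ep|)$ at $t_2$. Cauchy--Schwarz with $\int\sig^\ep=1$ then gives $\int|D\sig^\ep|\le(\int|D\sig^\ep|^2/\sig^\ep)^{1/2}$, which is \eqref{eq:deg-Dsigma-endpoint} for $i=2$. The argument on $[\ep,2\ep]$ is the same and gives $t_1$.

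\textbf{Step 2: the weighted bound \eqref{eq:deg-weighted-fisher}.} Multiply \eqref{eq:plan-entropy} by $\chi_\tau$ and integrate over $[t_1,t_2]$. Integrating by parts in time yields boundary terms $\chi_\tau E$ at $t_1,t_2$, which are at most $C\ep\,(1+|\log\ep|)$, and a term $-\int\chi_\tau' E\,dt$. Since $|\chi_\tau'|\le T$ and $E\le C(1+|\log\ep|)$ on $[t_1,t_2]$, this term is at most $C(1+|\log\ep|)$. For the Hessian term, Young's inequality gives
\[
C\chi_\tau|D^2u^\ep|\sig^\ep\le\del\chi_\tau|D^2u^\ep|^2\sig^\ep+C_\del\chi_\tau\sig^\ep ,
\]
and the second piece integrates to $C_\del$. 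Combining these bounds gives \eqref{eq:deg-weighted-fisher}.
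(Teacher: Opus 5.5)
You handle the entropy identity, the lower bound $E\ge0$, and the selection of $t_2$ correctly. For $\tau-t\in[\ep,2\ep]$ a Nash-type argument does give $E(t)\le C(1+|\log\ep|)$, because there the bounded drift only costs a factor $e^{C(\tau-t)/\ep}$. An $L^2$ bound is enough for this, since $E\le\log\int_{\T^n}(\sig^\ep)^2\,dx$ by Jensen. On an interval of length $\ep$, the Hessian error is $O(1)$ by Lemma~\ref{lem:deg-basic-D2}.

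The gap is the upper bound on $E(t)$ when $\tau-t$ is not of order $\ep$. Selecting $t_1\in[\ep,2\ep]$ is not ``the same argument.'' It requires $E(2\ep)\le C(1+|\log\ep|)$, although $\tau-2\ep$ may be of order one. Your justification, that the entropy is ``essentially monotone up to controlled errors,'' does not hold. Integrating your differential inequality from $t$ to $t_2$ gives
\[
E(t)\le E(t_2)+C+C\int_t^{t_2}\int_{\T^n}|D^2u^\ep|\sig^\ep\,dx\,ds .
\]
Over a time interval of length $O(1)$, the only bound you have for the last integral is Lemma~\ref{lem:deg-basic-D2} plus Cauchy--Schwarz, which gives $C\ep^{-1/2}$. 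A bound of order $1+|\log\ep|$ for this integral is exactly what Proposition~\ref{prop:deg-weighted-D2} and the proof of Theorem~\ref{thm:intro-log-rate} establish, and they use this lemma, so the argument is circular. Step~2 relies on the same unproved bound for $\int_{t_1}^{t_2}\chi_\tau'E\,dt$. Your first suggestion does not help either: a bound $\sig^\ep\le C(\ep(\tau-t))^{-n/2}$ with $C$ independent of $\ep$ is false for general bounded drifts when $\tau-t\gg\ep$. A confining drift of unit speed with diffusion $\ep$ produces densities of height of order $\ep^{-n}$.

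The missing ingredient is a bound on $\sig^\ep$ that holds for all $t<\tau$ and depends only polynomially on $1/\ep$. The paper reverses time and applies \cite[Corollary~7.2.3]{BKRS15}, obtaining \eqref{eq:deg-rho-upper} and hence \eqref{eq:deg-entropy-abs-bound}: $\int_{\T^n}\sig^\ep|\log\sig^\ep|\,dx\le C(1+|\log\ep|+|\log(\tau-t)|)$ for every $0\le t<\tau$. With this bound, both the choice of $t_1$ and the term $\int\chi_\tau'E$ go through as you wrote them. Your treatment of the drift through $\Div b^\ep$ and Lemma~\ref{lem:deg-basic-D2}, instead of the paper's Young inequality with a $C/\ep$ error, is fine.

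Step~2 alone can actually avoid the long-time bound. On $[t_1,\tau/2]$ you have $-\chi_\tau'E\le0$, since $E\ge0$. On $[\tau/2,t_2]$ the monotonicity error can be absorbed into $\del\int\!\!\int\chi_\tau|D^2u^\ep|^2\sig^\ep$, because $\chi_\tau(s)\ge\frac\tau2(\tau-s)$ there and $\int_{\tau/2}^{t_2}\frac{ds}{\tau-s}\le C(1+|\log\ep|)$. The $t_1$ half of \eqref{eq:deg-Dsigma-endpoint}, however, still needs an upper bound on $E(2\ep)$, and your proposal does not supply one.
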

	
	\begin{proof}
		Introduce the time-reversed density
		\[
		\rho^\ep(x,s):=\sig^\ep(x,\tau-s),
		\qquad 0<s<\tau .
		\]
		Then \(\rho^\ep\) solves
		\begin{equation}\label{eq:deg-rho-reversed}
			\rho^\ep_s-\Del(A^\ep\rho^\ep)+\Div(\widehat b^\ep\rho^\ep)=0,
			\qquad
			\rho^\ep(\cdot,0)=\delta_{x_0},
		\end{equation}
		where
		\[
		\widehat b^\ep(x,s):=-b^\ep(x,\tau-s).
		\]
		The coefficients \(A^\ep\) and \(\widehat b^\ep\) are uniformly bounded, and
		the diffusion matrix \((a+\ep)I_n\) has ellipticity constant \(\lambda_0=\ep\).
		
		We apply \cite[Corollary~7.2.3]{BKRS15} to
		\eqref{eq:deg-rho-reversed}, with diffusion matrix \((a+\ep)I_n\), drift
		\(\widehat b^\ep\), zero potential term, and \(\lambda_0=\ep\). Thus, for
		any \(\nu>(n+2)/2\),
		\begin{equation}\label{eq:deg-rho-upper}
			\rho^\ep(x,s)
			\le
			C_{\nu}
			\left(1+\frac1\ep\right)^\nu
			s^{-\frac n2}
			\left(
			1+\frac{s^{2\nu}}{\ep^\nu}
			\right),
			\qquad 0<s<\tau .
		\end{equation}
		Consequently,
		\begin{equation}\label{eq:deg-log-rho-upper}
			\log\rho^\ep(x,s)
			\le
			C\bigl(1+|\log\ep|+|\log s|\bigr),
			\qquad 0<s<\tau .
		\end{equation}
		Since \(\int_{\T^n}\rho^\ep(x,s)\,dx=1\), and since
		\(-r\log r\le C\) for \(0\le r\le1\), we obtain
		\begin{equation}\label{eq:deg-entropy-abs-bound}
			\int_{\T^n}\sig^\ep(x,t)|\log\sig^\ep(x,t)|\,dx
			\le
			C\bigl(1+|\log\ep|+|\log(\tau-t)|\bigr),
			\qquad 0\le t<\tau .
		\end{equation}
		
		Set
		\[
		E(t):=\int_{\T^n}\sig^\ep(x,t)\log\sig^\ep(x,t)\,dx .
		\]
		Testing the adjoint equation by \(1+\log\sig^\ep\), we get
		\begin{equation}\label{eq:deg-entropy-id}
			E'(t)
			=
			\int_{\T^n}
			A^\ep\frac{|D\sig^\ep|^2}{\sig^\ep}\,dx
			+
			\int_{\T^n}
			(b^\ep+Da)\cdot D\sig^\ep\,dx .
		\end{equation}
		By Young's inequality, 
		\[
		|(b^\ep+Da)\cdot D\sig^\ep|\le \frac{\ep}{2}\frac{|D\sig^\ep|^2}{\sig^\ep}+\frac{C}{\ep}|b^\ep+Da|^2\sig^\ep,
		\quad A^\ep\ge\ep.
		\] 
		Using the boundedness of \(b^\ep\) and
		\(Da\), we have
		\begin{equation}\label{eq:deg-fisher-pointwise}
			\frac{\ep}{2}
			\int_{\T^n}
			\frac{|D\sig^\ep|^2}{\sig^\ep}\,dx
			\le
			E'(t)+\frac{C}{\ep}.
		\end{equation}
		Integrating over \([\ep,2\ep]\), using
		\eqref{eq:deg-entropy-abs-bound} and the lower bound \(E(t)\ge -C\), gives
		\[
		\int_\ep^{2\ep}\int_{\T^n}
		\frac{|D\sig^\ep|^2}{\sig^\ep}\,dx\,dt
		\le
		\frac{C}{\ep}\bigl(1+|\log\ep|\bigr).
		\]
		Hence there exists \(t_1\in[\ep,2\ep]\) such that
		\[
		\int_{\T^n}
		\frac{|D\sig^\ep(x,t_1)|^2}{\sig^\ep(x,t_1)}\,dx
		\le
		\frac{C}{\ep^2}\bigl(1+|\log\ep|\bigr).
		\]
		By H\"older's inequality and conservation of mass,
		\[
		\int_{\T^n}|D\sig^\ep(x,t_1)|\,dx
		\le
		\frac{C}{\ep}\bigl(1+|\log\ep|\bigr)^{1/2}.
		\]
		The same argument on \([\tau-2\ep,\tau-\ep]\) gives the estimate at some
		\(t_2\in[\tau-2\ep,\tau-\ep]\). This proves
		\eqref{eq:deg-Dsigma-endpoint}.
		
		It remains to prove \eqref{eq:deg-weighted-fisher}. Integrating by parts in
		the last term of \eqref{eq:deg-entropy-id}, we obtain
		\begin{equation}\label{eq:deg-entropy-expanded}
			E'(t)
			=
			\int_{\T^n}
			A^\ep\frac{|D\sig^\ep|^2}{\sig^\ep}\,dx
			-
			\int_{\T^n}
			(\Div b^\ep+\Del a)\sig^\ep\,dx .
		\end{equation}
		Since
		\[
		\Div b^\ep
		=
		H_{x_i p_i}(x,Du^\ep)
		+
		H_{p_i p_j}(x,Du^\ep)u^\ep_{x_i x_j},
		\]
		we have, for every \(\del>0\),
		\begin{equation}\label{eq:deg-divb-bound}
			\left|
			\int_{\T^n}
			(\Div b^\ep+\Del a)\sig^\ep\,dx
			\right|
			\le
			\del
			\int_{\T^n}|D^2u^\ep|^2\sig^\ep\,dx
			+
			C_\del .
		\end{equation}
		Multiplying \eqref{eq:deg-entropy-expanded} by \(\chi_\tau(t)\), integrating
		from \(t_1\) to \(t_2\), and integrating by parts in time, we find
		\[
		\begin{aligned}
			\int_{t_1}^{t_2}\int_{\T^n}
			A^\ep\chi_\tau
			\frac{|D\sig^\ep|^2}{\sig^\ep}\,dx\,dt
			&=
			[\chi_\tau(t)E(t)]_{t_1}^{t_2}
			-
			\int_{t_1}^{t_2}\chi_\tau'(t)E(t)\,dt        \\
			&\quad
			+
			\int_{t_1}^{t_2}\int_{\T^n}
			\chi_\tau(t)(\Div b^\ep+\Del a)\sig^\ep\,dx\,dt .
		\end{aligned}
		\]
		By \eqref{eq:deg-entropy-abs-bound} and the choice of \(t_1,t_2\),
		\[
		\left|
		[\chi_\tau(t)E(t)]_{t_1}^{t_2}
		-
		\int_{t_1}^{t_2}\chi_\tau'(t)E(t)\,dt
		\right|
		\le
		C\bigl(1+|\log\ep|\bigr).
		\]
		Combining this estimate with \eqref{eq:deg-divb-bound} gives
		\eqref{eq:deg-weighted-fisher}.
	\end{proof}
	
	We next prove the weighted second-derivative estimate which is the main
	estimate in this section.
	
	\begin{prop}\label{prop:deg-weighted-D2}
		Let \(4\ep<\tau\le T\), let \(\sig^\ep\) solve
		\eqref{eq:deg-adjoint-dirac}, and let \(t_1,t_2\) be chosen as in
		Lemma~\ref{lem:deg-entropy}. Then
		\begin{equation}\label{eq:deg-weighted-D2}
			\int_{t_1}^{t_2}\int_{\T^n}
			\chi_\tau(t)|D^2u^\ep|^2\sig^\ep\,dx\,dt
			\le
			C\bigl(1+|\log\ep|\bigr).
		\end{equation}
	\end{prop}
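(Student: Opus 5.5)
We follow the semiconvexity-type argument of Cirant and Goffi. The idea is to differentiate the equation twice in a direction, use the uniform convexity of \(H\) to produce a coercive term \(\theta|D^2u^\ep|^2\), pair the result with \(\chi_\tau\sig^\ep\), and control the resulting error terms by Lemma~\ref{lem:deg-entropy}.

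\emph{Twice-differentiated equation.} Differentiating \eqref{eq:deg-xi-eq} in \(x_i\) and summing over \(i\), with \(v:=\Del u^\ep\), gives
\[
L^\ep v+H_{p_jp_k}u^\ep_{x_ix_j}u^\ep_{x_ix_k}
=-H_{x_ix_i}-2H_{x_ip_j}u^\ep_{x_ix_j}+2Da\cdot Dv+(\Del a)v .
\]
Uniform convexity gives \(H_{p_jp_k}u^\ep_{x_ix_j}u^\ep_{x_ix_k}\ge\theta|D^2u^\ep|^2\), so, after Young's inequality on the \(H_{xp}\) term,
\[
L^\ep v+\tfrac{\theta}{2}|D^2u^\ep|^2\le C+2Da\cdot Dv+C|D^2u^\ep| .
\]

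\emph{Pairing with the weighted adjoint density.} Multiply by \(\chi_\tau\sig^\ep\) and integrate over \(\T^n\times[t_1,t_2]\); the dual equation \eqref{eq:deg-adjoint-dirac} moves \(L^\ep\) onto the test function. Since \(\chi_\tau\) depends only on time, the transport and diffusion parts cancel against the adjoint equation. What remains is the time derivative \(-\chi_\tau' v\sig^\ep\) and the boundary terms \([\chi_\tau\int v\sig^\ep]_{t_1}^{t_2}\).
\begin{itemize}
\item \emph{Boundary terms.} Integrate by parts in \(x\): \(|\int \Del u^\ep\sig^\ep|=|\int Du^\ep\cdot D\sig^\ep|\le C\int|D\sig^\ep|\). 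With \(\chi_\tau(t_i)\le C\ep\) and \eqref{eq:deg-Dsigma-endpoint}, these are \(O((1+|\log\ep|)^{1/2})\).
\item \emph{Term \(\chi_\tau' v\sig^\ep\).} Integrate by parts again, \(\int\chi_\tau'\,Du^\ep\cdot D\sig^\ep\), and use \(|\chi_\tau'|\le T\) to bound it by \(\int\!\!\int|D\sig^\ep|\). This is the delicate point. Without the weight \(\chi_\tau\), \(\int|D\sig^\ep|\) is only controlled weighted by \(A^\ep\). I would instead bound it by Cauchy--Schwarz as
\[
\Bigl(\int\!\!\int A^\ep\chi_\tau\tfrac{|D\sig^\ep|^2}{\sig^\ep}\Bigr)^{1/2}\Bigl(\int\!\!\int \tfrac{\sig^\ep}{A^\ep\chi_\tau}\Bigr)^{1/2}.
\]
That is too lossy, since it costs \(\ep^{-1}\). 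So instead, keep \(v\sig^\ep\) and estimate \(|\chi_\tau' v|\le\eta\chi_\tau|D^2u^\ep|^2+C_\eta\chi_\tau'^2/\chi_\tau\). This gives \(\int_{t_1}^{t_2}\chi_\tau'^2/\chi_\tau\,dt\le C(1+|\log\ep|)\) since \(t_1\ge\ep\), \(\tau-t_2\ge\ep\). This is exactly the logarithm, so I choose this route.
\item \emph{Term \(C|D^2u^\ep|\).} Absorb it by Young's inequality.
\end{itemize}

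\emph{The \(Da\cdot Dv\) term (main obstacle).} This term involves third derivatives. Integrate by parts in \(x\):
\[
\int\chi_\tau\sig^\ep Da\cdot D\Del u^\ep=-\int\chi_\tau(\Del a\,\sig^\ep+Da\cdot D\sig^\ep)\Del u^\ep .
\]
The first piece is handled by Young's inequality. For the second, use Young together with Lemma~\ref{lem:Da-structure}:
\[
|Da\cdot D\sig^\ep||\Del u^\ep|\le \eta|D^2u^\ep|^2\sig^\ep+C_\eta\frac{|Da|^2|D\sig^\ep|^2}{\sig^\ep}\le \eta|D^2u^\ep|^2\sig^\ep+C_\eta A^\ep\frac{|D\sig^\ep|^2}{\sig^\ep}.
\]
Multiplying by \(\chi_\tau\) and invoking \eqref{eq:deg-weighted-fisher} with \(\del\) small relative to \(\eta\) and \(C_\eta\) bounds this by \((\eta+C_\eta\del)\int\!\!\int\chi_\tau|D^2u^\ep|^2\sig^\ep+C(1+|\log\ep|)\).

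\emph{Conclusion.} Choose \(\eta\) so small that \(\eta\le\theta/8\), then \(\del\) so small that \(C_\eta\del\le\theta/8\). All \(\chi_\tau|D^2u^\ep|^2\sig^\ep\) contributions are then absorbed into the left-hand side \(\frac{\theta}{2}\int\!\!\int\chi_\tau|D^2u^\ep|^2\sig^\ep\), which yields \eqref{eq:deg-weighted-D2}. The absorption is legitimate because the left side is finite for fixed \(\ep\), by smoothness of \(u^\ep\) on \([t_1,t_2]\).

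The main difficulty is the sign and coefficient bookkeeping in the \(Da\)-term. One has to confirm that its only genuinely dangerous part is \(Da\cdot D\sig^\ep\,\Del u^\ep\), whose Fisher-information cost carries exactly the factor \(A^\ep\) that the structure inequality supplies.
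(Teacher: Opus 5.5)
Your proposal is correct and follows essentially the same route as the paper: you derive the Bernstein-type inequality for $\Del u^\ep$, pair it with $\chi_\tau\sig^\ep$ on $[t_1,t_2]$, bound the boundary terms by integration by parts together with $\chi_\tau(t_i)\le C\ep$ and the endpoint bound on $\int|D\sig^\ep(\cdot,t_i)|$, handle the $\chi_\tau'$ term by Young's inequality with $|\chi_\tau'|^2/\chi_\tau$, and treat the $Da\cdot D\Del u^\ep$ term by integration by parts, $|Da|^2\le CA^\ep$, and the weighted Fisher-information estimate before absorbing. The differences from the paper are cosmetic: your boundary-term bound $O((1+|\log\ep|)^{1/2})$ is slightly sharper, and you explicitly justify the absorption by noting that the left-hand side is finite for fixed $\ep$.
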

	
	\begin{proof}
		Set
		\[
		\phi^\ep:=\Del u^\ep .
		\]
		Applying \(\Del\) to \eqref{eq:intro-eps}, we get
		\[
		\begin{gathered}
			\phi^\ep_t
			+
			b^\ep\cdot D\phi^\ep
			-
			A^\ep\Del\phi^\ep
			-
			2Da\cdot D\phi^\ep                                      \\
			\quad
			+
			H_{p_jp_k}(x,Du^\ep)u^\ep_{x_i x_j}u^\ep_{x_i x_k}
			+
			2H_{x_i p_j}(x,Du^\ep)u^\ep_{x_i x_j}
			+
			H_{x_i x_i}(x,Du^\ep)
			-
			(\Del a)\phi^\ep
			=0 .
		\end{gathered}
		\]
		Here and below repeated indices are summed. By
		\eqref{eq:intro-assumptions},
		\[
		H_{p_jp_k}(x,Du^\ep)u^\ep_{x_i x_j}u^\ep_{x_i x_k}
		\ge
		\theta |D^2u^\ep|^2 .
		\]
		The remaining terms involving \(H_{xp}\), \(H_{xx}\), and \(\Del a\) are
		controlled by Young's inequality and the boundedness of \(Du^\ep\). Hence
		\begin{equation}\label{eq:deg-bernstein-short}
			L^\ep\phi^\ep
			-
			2Da\cdot D\phi^\ep
			+
			\frac{\theta}{2}|D^2u^\ep|^2
			\le C .
		\end{equation}
		Multiplying by \(\chi_\tau(t)\), we obtain
		\[
		L^\ep(\chi_\tau\phi^\ep)
		+
		\frac{\theta}{2}\chi_\tau |D^2u^\ep|^2
		\le
		C
		+
		\chi_\tau'\phi^\ep
		+
		2\chi_\tau Da\cdot D\phi^\ep .
		\]
		Pairing this inequality with \(\sig^\ep\) on \([t_1,t_2]\), we get
		\begin{equation}\label{eq:deg-weighted-D2-start}
			\begin{aligned}
				\frac{\theta}{2}
				\int_{t_1}^{t_2}\int_{\T^n}
				\chi_\tau |D^2u^\ep|^2\sig^\ep\,dx\,dt
				&\le
				\int_{\T^n}\chi_\tau(t_1)\phi^\ep(x,t_1)\sig^\ep(x,t_1)\,dx       \\
				&\quad
				-
				\int_{\T^n}\chi_\tau(t_2)\phi^\ep(x,t_2)\sig^\ep(x,t_2)\,dx       \\
				&\quad
				+
				\int_{t_1}^{t_2}\int_{\T^n}
				\chi_\tau'\phi^\ep\sig^\ep\,dx\,dt                               \\
				&\quad
				+
				2\int_{t_1}^{t_2}\int_{\T^n}
				\chi_\tau Da\cdot D\phi^\ep\,\sig^\ep\,dx\,dt
				+
				C .
			\end{aligned}
		\end{equation}
		
		For the boundary terms, integration by parts gives
		\[
		\int_{\T^n}\phi^\ep(x,t_i)\sig^\ep(x,t_i)\,dx
		=
		-\int_{\T^n}Du^\ep(x,t_i)\cdot D\sig^\ep(x,t_i)\,dx .
		\]
		Since \(\chi_\tau(t_i)\le C\ep\), \eqref{eq:intro-lipschitz} and
		\eqref{eq:deg-Dsigma-endpoint} imply
		\begin{equation}\label{eq:deg-boundary-D2}
			\left|
			\int_{\T^n}\chi_\tau(t_i)\phi^\ep(x,t_i)\sig^\ep(x,t_i)\,dx
			\right|
			\le
			C\bigl(1+|\log\ep|\bigr),
			\qquad i=1,2 .
		\end{equation}
		
		For the term containing \(\chi_\tau'\), Young's inequality gives
		\begin{equation}\label{eq:deg-chip-term}
			\begin{aligned}
				\left|
				\int_{t_1}^{t_2}\int_{\T^n}
				\chi_\tau'\phi^\ep\sig^\ep\,dx\,dt
				\right|
				&\le
				\del
				\int_{t_1}^{t_2}\int_{\T^n}
				\chi_\tau |D^2u^\ep|^2\sig^\ep\,dx\,dt       \\
				&\quad
				+
				C_\del
				\int_{t_1}^{t_2}
				\frac{|\chi_\tau'(t)|^2}{\chi_\tau(t)}\,dt .
			\end{aligned}
		\end{equation}
		Because \(t_1\in[\ep,2\ep]\) and \(t_2\in[\tau-2\ep,\tau-\ep]\),
		\[
		\int_{t_1}^{t_2}
		\frac{|\chi_\tau'(t)|^2}{\chi_\tau(t)}\,dt
		\le
		C\bigl(1+|\log\ep|\bigr).
		\]
		Thus
		\begin{equation}\label{eq:deg-chip-term-final}
			\left|
			\int_{t_1}^{t_2}\int_{\T^n}
			\chi_\tau'\phi^\ep\sig^\ep\,dx\,dt
			\right|
			\le
			\del
			\int_{t_1}^{t_2}\int_{\T^n}
			\chi_\tau |D^2u^\ep|^2\sig^\ep\,dx\,dt
			+
			C_\del\bigl(1+|\log\ep|\bigr).
		\end{equation}
		
		It remains to estimate the term involving \(Da\). Let
		\[
		J:=
		\int_{t_1}^{t_2}\int_{\T^n}
		\chi_\tau Da\cdot D\phi^\ep\,\sig^\ep\,dx\,dt .
		\]
		Integrating by parts in space,
		\[
		J
		=
		-\int_{t_1}^{t_2}\int_{\T^n}
		\chi_\tau \phi^\ep(\Del a)\sig^\ep\,dx\,dt
		-
		\int_{t_1}^{t_2}\int_{\T^n}
		\chi_\tau \phi^\ep Da\cdot D\sig^\ep\,dx\,dt .
		\]
		The first term is bounded by
		\begin{equation}\label{eq:deg-Da-first}
			\left|
			\int_{t_1}^{t_2}\int_{\T^n}
			\chi_\tau \phi^\ep(\Del a)\sig^\ep\,dx\,dt
			\right|
			\le
			\del
			\int_{t_1}^{t_2}\int_{\T^n}
			\chi_\tau |D^2u^\ep|^2\sig^\ep\,dx\,dt
			+
			C_\del .
		\end{equation}
		For the second term, Lemma~\ref{lem:Da-structure} and \(a\le A^\ep\) give
		\[
		|Da|^2\le C A^\ep .
		\]
		Therefore, by Young's inequality,
		\[
		\begin{aligned}
			\left|
			\int_{t_1}^{t_2}\int_{\T^n}
			\chi_\tau \phi^\ep Da\cdot D\sig^\ep\,dx\,dt
			\right|
			&\le
			\del
			\int_{t_1}^{t_2}\int_{\T^n}
			\chi_\tau |\phi^\ep|^2\sig^\ep\,dx\,dt                         \\
			&\quad
			+
			C_\del
			\int_{t_1}^{t_2}\int_{\T^n}
			A^\ep\chi_\tau
			\frac{|D\sig^\ep|^2}{\sig^\ep}\,dx\,dt .
		\end{aligned}
		\]
		Since \(|\phi^\ep|^2\le n|D^2u^\ep|^2\), Lemma~\ref{lem:deg-entropy} implies
		\begin{equation}\label{eq:deg-Da-second}
			\left|
			\int_{t_1}^{t_2}\int_{\T^n}
			\chi_\tau \phi^\ep Da\cdot D\sig^\ep\,dx\,dt
			\right|
			\le
			\del
			\int_{t_1}^{t_2}\int_{\T^n}
			\chi_\tau |D^2u^\ep|^2\sig^\ep\,dx\,dt
			+
			C_\del\bigl(1+|\log\ep|\bigr).
		\end{equation}
		Here and below the small constants in Young's inequality and in the Fisher
		information estimate are chosen successively; we keep denoting the resulting
		small parameter by \(\del\). Combining \eqref{eq:deg-Da-first} and
		\eqref{eq:deg-Da-second}, we obtain
		\begin{equation}\label{eq:deg-Da-term-final}
			|J|
			\le
			\del
			\int_{t_1}^{t_2}\int_{\T^n}
			\chi_\tau |D^2u^\ep|^2\sig^\ep\,dx\,dt
			+
			C_\del\bigl(1+|\log\ep|\bigr).
		\end{equation}
		
		Substituting \eqref{eq:deg-boundary-D2},
		\eqref{eq:deg-chip-term-final}, and \eqref{eq:deg-Da-term-final} into
		\eqref{eq:deg-weighted-D2-start}, and choosing \(\del>0\) sufficiently small,
		we obtain \eqref{eq:deg-weighted-D2}.
	\end{proof}

	\begin{rem}
		When \(a\equiv 0\), the corresponding convex case was treated by
		Cirant and Goffi \cite{CirantGoffi2025}. The new difficulty here is caused
		by the terms involving \(Da\), which contain
		\(D\phi^\ep=D\Delta u^\ep\). After integration by parts, these terms produce
		derivatives of the adjoint density, and hence must be controlled by the
		Fisher-information estimate.
	\end{rem}

	We can now prove the main estimate.
	
	\begin{proof}[Proof of Theorem~\ref{thm:intro-log-rate}]
		It suffices to estimate \(\partial_\ep u^\ep\). Fix \(x_0\in\T^n\) and
		\(0<\tau\le T\), and let \(\sig^\ep\) solve
		\eqref{eq:deg-adjoint-dirac}. By \eqref{eq:deg-weps-adjoint},
		\begin{equation}\label{eq:deg-main-weps}
			\partial_\ep u^\ep(x_0,\tau)
			=
			\int_0^\tau\int_{\T^n}
			\Del u^\ep\,\sig^\ep\,dx\,dt .
		\end{equation}
		
		If \(\tau\le4\ep\), then Lemma~\ref{lem:deg-basic-D2} gives
		\[
		\begin{gathered}
			|\partial_\ep u^\ep(x_0,\tau)|
			\le
			C
			\int_0^\tau\int_{\T^n}
			|D^2u^\ep|\sig^\ep\,dx\,dt                                    \\
			\le
			C
			\left(
			\int_0^\tau\int_{\T^n}
			A^\ep |D^2u^\ep|^2\sig^\ep\,dx\,dt
			\right)^{1/2}
			\left(
			\int_0^\tau\int_{\T^n}
			\frac{\sig^\ep}{A^\ep}\,dx\,dt
			\right)^{1/2}                                                  
			\le
			C .
		\end{gathered}
		\]
		We may therefore assume \(4\ep<\tau\le T\).
		
		Let \(t_1,t_2\) be chosen as in Lemma~\ref{lem:deg-entropy}. We split
		\[
		[0,\tau]=[0,t_1]\cup[t_1,t_2]\cup[t_2,\tau].
		\]
		On the two endpoint intervals, Lemma~\ref{lem:deg-basic-D2} yields
		\[
		\begin{aligned}
			\int_0^{t_1}\int_{\T^n}
			|D^2u^\ep|\sig^\ep\,dx\,dt
			&\le
			C
			\left(
			\int_0^{t_1}\int_{\T^n}
			\frac{\sig^\ep}{A^\ep}\,dx\,dt
			\right)^{1/2}
			\le C,                                                        \\
			\int_{t_2}^{\tau}\int_{\T^n}
			|D^2u^\ep|\sig^\ep\,dx\,dt
			&\le C .
		\end{aligned}
		\]
		For the middle interval, Proposition~\ref{prop:deg-weighted-D2} gives
		\[
		\begin{aligned}
			\int_{t_1}^{t_2}\int_{\T^n}
			|D^2u^\ep|\sig^\ep\,dx\,dt
			&\le
			\left(
			\int_{t_1}^{t_2}\int_{\T^n}
			\chi_\tau |D^2u^\ep|^2\sig^\ep\,dx\,dt
			\right)^{1/2}
			\left(
			\int_{t_1}^{t_2}\int_{\T^n}
			\frac{\sig^\ep}{\chi_\tau}\,dx\,dt
			\right)^{1/2}                                                  \\
			&\le
			C\bigl(1+|\log\ep|\bigr).
		\end{aligned}
		\]
		Here we used conservation of mass and
		\[
		\int_{t_1}^{t_2}\frac{dt}{t(\tau-t)}
		\le
		C\bigl(1+|\log\ep|\bigr).
		\]
		Combining the three estimates with \eqref{eq:deg-main-weps}, we obtain
		\[
		|\partial_\ep u^\ep(x_0,\tau)|
		\le
		C\bigl(1+|\log\ep|\bigr).
		\]
		Since \(x_0\in\T^n\) and \(0<\tau\le T\) are arbitrary,
		\begin{equation}\label{eq:deg-weps-log}
			\|\partial_\ep u^\ep\|_{L^\infty(\T^n\times[0,T])}
			\le
			C\bigl(1+|\log\ep|\bigr).
		\end{equation}
		For \(0<\eta<\ep\),
		\[
		\|u^\ep-u^\eta\|_{L^\infty(\T^n\times[0,T])}
		\le
		\int_\eta^\ep
		C\bigl(1+|\log s|\bigr)\,ds
		\le
		C\ep\bigl(1+|\log\ep|\bigr).
		\]
		Letting \(\eta\to0\), and using the uniform convergence \(u^\eta\to u\), we
		obtain
		\[
		\|u^\ep-u\|_{L^\infty(\T^n\times[0,T])}
		\le
		C\ep\bigl(1+|\log\ep|\bigr).
		\]
		This proves Theorem~\ref{thm:intro-log-rate}.
	\end{proof}
	
	\begin{rem}
		Since the terminal datum of the adjoint equation is a Dirac mass,
		\(\sig^\ep\) may be singular near \(t=\tau\). For this reason, we split
		the time interval and introduce the cutoff function \(\chi_\tau(t)\) to
		avoid the endpoint singularity.
	\end{rem}
	
	\section{The averaged error estimate}
	\label{sec:averaged-estimate}
	
	In this section we prove Theorem~\ref{thm:intro-avg-rate}. We keep the notation
	from the previous section:
	\[
	A^\ep(x):=a(x)+\ep,
	\qquad
	b^\ep(x,t):=D_pH(x,Du^\ep(x,t)),
	\]
	and \(L^\ep\) is defined by \eqref{eq:deg-L-eps}. The pointwise estimate was
	obtained by using an adjoint density with terminal datum \(\delta_{x_0}\). We
	now replace this terminal datum by a smooth probability density. This removes
	the singular endpoint estimates which were responsible for the logarithmic
	loss.
	
	Let \(r\in C^\infty(\T^n)\), \(r\ge0\), and
	\[
	\int_{\T^n}r(x)\,dx=1.
	\]
	Let \(\sig^\ep\) solve
	\begin{equation}\label{eq:avg-adjoint}
		\begin{cases}
			-\sig^\ep_t-\Div(b^\ep\sig^\ep)-\Del(A^\ep\sig^\ep)=0,
			& (x,t)\in\T^n\times(0,T),\\[1mm]
			\sig^\ep(\cdot,T)=r,
			& x\in\T^n .
		\end{cases}
	\end{equation}
	Then
	\begin{equation}\label{eq:avg-sigma-basic}
		\sig^\ep\ge0,
		\qquad
		\int_{\T^n}\sig^\ep(x,t)\,dx=1
		\quad\text{for }0\le t\le T .
	\end{equation}
	As in the previous section, the adjoint relation gives, for every function to
	which the integrations by parts apply,
	\begin{equation}\label{eq:avg-duality}
		\int_{\T^n}f(x,T)r(x)\,dx
		-
		\int_{\T^n}f(x,0)\sig^\ep(x,0)\,dx
		=
		\int_0^T\int_{\T^n}L^\ep f\,\sig^\ep\,dx\,dt .
	\end{equation}
	
	Let
	\[
	w^\ep:=\partial_\ep u^\ep .
	\]
	Differentiating \eqref{eq:intro-eps} with respect to \(\ep\), we have
	\begin{equation}\label{eq:avg-weps-eq}
		L^\ep w^\ep=\Del u^\ep,
		\qquad
		w^\ep(x,0)=0 .
	\end{equation}
	Taking \(f=w^\ep\) in \eqref{eq:avg-duality}, we obtain
	\begin{equation}\label{eq:avg-adjoint-identity}
		\int_{\T^n}w^\ep(x,T)r(x)\,dx
		=
		\int_0^T\int_{\T^n}
		\Del u^\ep\,\sig^\ep\,dx\,dt .
	\end{equation}
	
	We first prove the entropy estimate for \(\sig^\ep\). Compared with
	Lemma~\ref{lem:deg-entropy}, no endpoint selection is needed, since the terminal
	datum is now the smooth density \(r\).
	
	\begin{lem}\label{lem:avg-entropy}
		Let \(\sig^\ep\) solve \eqref{eq:avg-adjoint}. Then, for every
		\(\del>0\),
		\begin{equation}\label{eq:avg-fisher}
			\begin{aligned}
				\int_0^T\int_{\T^n}
				A^\ep
				\frac{|D\sig^\ep|^2}{\sig^\ep}\,dx\,dt
				&\le
				\del
				\int_0^T\int_{\T^n}
				|D^2u^\ep|^2\sig^\ep\,dx\,dt                                      \\
				&\quad
				+
				C_\del
				\left(
				1+\int_{\T^n}r|\log r|\,dx
				\right).
			\end{aligned}
		\end{equation}
	\end{lem}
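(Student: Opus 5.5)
The plan is to repeat the entropy computation of Lemma~\ref{lem:deg-entropy}, now on the full interval $[0,T]$. No cutoff $\chi_\tau$ is needed, because the terminal datum $r$ is smooth and has finite entropy. Set
\[
E(t):=\int_{\T^n}\sig^\ep(x,t)\log\sig^\ep(x,t)\,dx .
\]
Exactly as in \eqref{eq:deg-entropy-id} and \eqref{eq:deg-entropy-expanded}, we test \eqref{eq:avg-adjoint} by $1+\log\sig^\ep$ and integrate by parts in the drift term $\int(b^\ep+Da)\cdot D\sig^\ep\,dx$. This gives
\[
E'(t)=\int_{\T^n}A^\ep\frac{|D\sig^\ep|^2}{\sig^\ep}\,dx-\int_{\T^n}(\Div b^\ep+\Del a)\sig^\ep\,dx .
\]
Integrating over $[0,T]$ yields
\[
\int_0^T\int_{\T^n}A^\ep\frac{|D\sig^\ep|^2}{\sig^\ep}\,dx\,dt
=E(T)-E(0)+\int_0^T\int_{\T^n}(\Div b^\ep+\Del a)\sig^\ep\,dx\,dt .
\]

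The last term is handled exactly as in \eqref{eq:deg-divb-bound}. We write $\Div b^\ep=H_{x_ip_i}+H_{p_ip_j}u^\ep_{x_ix_j}$ and use the uniform gradient bound \eqref{eq:intro-lipschitz}. Together with Young's inequality and conservation of mass \eqref{eq:avg-sigma-basic}, this bounds the term by
\[
\del\int_0^T\int_{\T^n}|D^2u^\ep|^2\sig^\ep\,dx\,dt+C_\del .
\]

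For the boundary terms we use $E(T)=\int r\log r\,dx\le\int r|\log r|\,dx$. For $-E(0)$ we only need a lower bound on the entropy. Since $\sig^\ep(\cdot,0)$ is a probability density on $\T^n$, which has finite volume, and $-s\log s\le C$ for $0\le s\le 1$ while $s\log s\ge0$ for $s\ge1$, we get $E(0)\ge-C$. Hence $-E(0)\le C$. Unlike in the proof of Lemma~\ref{lem:deg-entropy}, no upper bound on $\sig^\ep$ from \cite{BKRS15} is required here. This is exactly why the logarithmic factor disappears: the only endpoint contributions are the entropy of $r$ and the trivial lower bound at $t=0$.

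Combining these estimates gives \eqref{eq:avg-fisher}. The constant $C_\del$ depends on $\del$, $T$, and the bounds for $H$ on the compact set determined by \eqref{eq:intro-lipschitz}, but not on $\ep$ or $r$.

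The main obstacle is justifying the computation rigorously, since $r$ may vanish, so $\log\sig^\ep$ can blow up. I would first replace $r$ by $r_\kappa=(r+\kappa)/(1+\kappa|\T^n|)$. By the strong maximum principle for the uniformly parabolic ($A^\ep\ge\ep$) backward equation, the corresponding $\sig^\ep_\kappa$ is smooth and strictly positive on $[0,T]$. The identity above then holds for $\sig^\ep_\kappa$. We have $\int r_\kappa|\log r_\kappa|\to\int r|\log r|$, and the estimate passes to the limit $\kappa\to0$ by lower semicontinuity of the Fisher information, using the convex integrand $|p|^2/s$. Near $t=T$, continuity of $E$ up to $t=T$ follows from the smoothness of $r_\kappa$.
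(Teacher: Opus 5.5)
Your proposal is correct and follows essentially the same argument as the paper. Both use the entropy identity for \(E(t)\), integration over \([0,T]\), the bound \(E(T)\le\int_{\T^n}r|\log r|\,dx\), the lower bound \(E(0)\ge -C\) on the torus, and Young's inequality on the \((\Div b^\ep+\Del a)\sig^\ep\) term. Your regularization \(r_\kappa=(r+\kappa)/(1+\kappa|\T^n|)\), used to justify the computation when \(r\) vanishes, is a rigor step the paper leaves implicit and does not change the argument.
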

	
	\begin{proof}
		Set
		\[
		E(t):=\int_{\T^n}\sig^\ep(x,t)\log\sig^\ep(x,t)\,dx .
		\]
		The same computation as in Lemma~\ref{lem:deg-entropy} gives
		\begin{equation}\label{eq:avg-entropy-expanded}
			E'(t)
			=
			\int_{\T^n}
			A^\ep\frac{|D\sig^\ep|^2}{\sig^\ep}\,dx
			-
			\int_{\T^n}
			(\Div b^\ep+\Del a)\sig^\ep\,dx .
		\end{equation}
		Integrating from \(0\) to \(T\), we find
		\begin{equation}\label{eq:avg-fisher-identity}
			\begin{aligned}
				\int_0^T\int_{\T^n}
				A^\ep\frac{|D\sig^\ep|^2}{\sig^\ep}\,dx\,dt
				&=
				E(T)-E(0)
				+
				\int_0^T\int_{\T^n}
				(\Div b^\ep+\Del a)\sig^\ep\,dx\,dt .
			\end{aligned}
		\end{equation}
		Since \(\sig^\ep(\cdot,T)=r\),
		\[
		E(T)=\int_{\T^n}r\log r\,dx
		\le
		\int_{\T^n}r|\log r|\,dx .
		\]
		Moreover, \(E(0)\ge -C\), because \(s\log s\ge -e^{-1}\) for \(s\ge0\) and
		the torus has finite measure. 
		
		It remains to estimate the last term in \eqref{eq:avg-fisher-identity}.
		Recall that
		\[
		\Div b^\ep
		=
		H_{x_i p_i}(x,Du^\ep)
		+
		H_{p_i p_j}(x,Du^\ep)u^\ep_{x_i x_j}.
		\]
		By \eqref{eq:intro-lipschitz}, the \(C^2\)-regularity of \(H\) on the
		relevant compact set, the boundedness of \(\Del a\), and Young's inequality,
		we have, for every \(\del>0\),
		\begin{equation}\label{eq:avg-divb-bound}
			\left|
			\int_{\T^n}
			(\Div b^\ep+\Del a)\sig^\ep\,dx
			\right|
			\le
			\del
			\int_{\T^n}
			|D^2u^\ep|^2\sig^\ep\,dx
			+
			C_\del .
		\end{equation}
		Combining \eqref{eq:avg-fisher-identity} and \eqref{eq:avg-divb-bound}, we
		obtain \eqref{eq:avg-fisher}.
	\end{proof}
	
	\begin{rem}
		In the proof above, we repeatedly use the fact that the entropy
		\[
		\int_{\T^n}\sig^\ep\log\sig^\ep\,dx
		\]
		is bounded from below. This follows from the finiteness of \(\T^n\) and
		the normalization \(\int_{\T^n}\sig^\ep\,dx=1\). Such a lower bound is not
		automatic on \(\R^n\), and additional moment or confinement estimates would
		be needed in that setting.
	\end{rem}

	We next derive the averaged analogue of the weighted Hessian estimate in the
	previous section. Since the terminal datum is smooth, no time cut-off is needed.
	
	\begin{prop}\label{prop:avg-D2}
		Let \(\sig^\ep\) solve \eqref{eq:avg-adjoint}. Then
		\begin{equation}\label{eq:avg-D2}
			\int_0^T\int_{\T^n}
			|D^2u^\ep|^2\sig^\ep\,dx\,dt
			\le
			C
			\left(
			1+\|Dr\|_{L^1(\T^n)}
			+\int_{\T^n}r|\log r|\,dx
			\right).
		\end{equation}
	\end{prop}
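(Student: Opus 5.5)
We follow the proof of Proposition~\ref{prop:deg-weighted-D2}, now without the time cutoff \(\chi_\tau\). With \(\phi^\ep:=\Del u^\ep\), the Bernstein-type computation there gives
\[
L^\ep\phi^\ep-2Da\cdot D\phi^\ep+\frac{\theta}{2}|D^2u^\ep|^2\le C
\]
on \(\T^n\times(0,T)\). We pair this inequality with the solution \(\sig^\ep\) of \eqref{eq:avg-adjoint} and use the duality identity \eqref{eq:avg-duality} with \(f=\phi^\ep\). This yields
\[
\frac{\theta}{2}\int_0^T\!\!\int_{\T^n}|D^2u^\ep|^2\sig^\ep
\le \int_{\T^n}\phi^\ep(x,0)\sig^\ep(x,0)\,dx-\int_{\T^n}\phi^\ep(x,T)r\,dx
+2\int_0^T\!\!\int_{\T^n}Da\cdot D\phi^\ep\,\sig^\ep+CT .
\]

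The two boundary terms are handled separately. At \(t=0\) we have \(\phi^\ep(\cdot,0)=\Del g\), which is bounded since \(g\in C^2\); with mass one, that term is at most \(C\). At \(t=T\) we integrate by parts:
\[
\int\phi^\ep(x,T)r\,dx=-\int Du^\ep(x,T)\cdot Dr\,dx .
\]
By the Lipschitz bound \eqref{eq:intro-lipschitz}, this term is at most \(C\|Dr\|_{L^1}\). This is exactly where \(\|Dr\|_{L^1}\) enters the bound. No endpoint selection is needed, because \(r\) is smooth and \(g\in C^2\).

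For the \(Da\)-term we integrate by parts in space, as in the treatment of \(J\) earlier:
\[
\int Da\cdot D\phi^\ep\,\sig^\ep=-\int\phi^\ep\Del a\,\sig^\ep-\int\phi^\ep Da\cdot D\sig^\ep .
\]
The first piece is bounded by \(\del\iint|D^2u^\ep|^2\sig^\ep+C_\del\). For the second we use Lemma~\ref{lem:Da-structure} in the form \(|Da|^2\le CA^\ep\), together with Young's inequality. This bounds it by
\[
\del\iint|\phi^\ep|^2\sig^\ep+C_\del\iint A^\ep\frac{|D\sig^\ep|^2}{\sig^\ep}.
\]
We then apply Lemma~\ref{lem:avg-entropy} with a parameter \(\del'\) chosen after \(\del\), so that \(C_\del\del'\) is small. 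The Fisher term then becomes
\[
(\text{small})\iint|D^2u^\ep|^2\sig^\ep+C\Bigl(1+\int r|\log r|\Bigr).
\]

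Collecting these bounds, every occurrence of \(\iint|D^2u^\ep|^2\sig^\ep\) on the right carries a small coefficient and can be absorbed into the left side. This gives \eqref{eq:avg-D2}.

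The main obstacle is justifying the absorption: the quantity \(\iint|D^2u^\ep|^2\sig^\ep\) must be finite a priori. This holds for fixed \(\ep>0\), since \(u^\ep\) is smooth by parabolic regularity and \(\sig^\ep\) has unit mass. A secondary point is the successive choice of \(\del\) and \(\del'\), which has to be done carefully, as in the remark in the proof of Proposition~\ref{prop:deg-weighted-D2}.
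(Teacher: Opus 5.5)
Your proposal is correct and follows the paper's proof essentially step for step. You pair the Bernstein inequality for $\phi^\ep=\Delta u^\ep$ with $\sigma^\ep$, bound the initial term via $\Delta g$ and the terminal term by integrating by parts against $Dr$, and treat the $Da$-term by spatial integration by parts, $|Da|^2\le CA^\ep$, Young's inequality and Lemma~\ref{lem:avg-entropy}, before absorbing; your extra remark that absorption needs $\int_0^T\int_{\T^n}|D^2u^\ep|^2\sigma^\ep\,dx\,dt<\infty$ a priori (true for fixed $\ep$) is a point the paper leaves implicit.
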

	
	\begin{proof}
		Set
		\[
		\phi^\ep:=\Del u^\ep .
		\]
		From \eqref{eq:deg-bernstein-short}, we have
		\begin{equation}\label{eq:avg-bernstein}
			L^\ep\phi^\ep
			-
			2Da\cdot D\phi^\ep
			+
			\frac{\theta}{2}|D^2u^\ep|^2
			\le C .
		\end{equation}
		Pairing \eqref{eq:avg-bernstein} with \(\sig^\ep\) over \([0,T]\), and
		using \eqref{eq:avg-duality}, gives
		\begin{equation}\label{eq:avg-D2-start}
			\begin{aligned}
				\frac{\theta}{2}
				\int_0^T\int_{\T^n}
				|D^2u^\ep|^2\sig^\ep\,dx\,dt
				&\le
				C
				-
				\int_{\T^n}\phi^\ep(x,T)r(x)\,dx                                  \\
				&\quad
				+
				\int_{\T^n}\phi^\ep(x,0)\sig^\ep(x,0)\,dx                          \\
				&\quad
				+
				2\int_0^T\int_{\T^n}
				Da\cdot D\phi^\ep\,\sig^\ep\,dx\,dt .
			\end{aligned}
		\end{equation}
		
		We first estimate the two boundary terms. Since \(u^\ep(x,0)=g(x)\),
		\[
		\phi^\ep(x,0)=\Del g(x),
		\]
		and therefore
		\begin{equation}\label{eq:avg-initial-boundary}
			\left|
			\int_{\T^n}\phi^\ep(x,0)\sig^\ep(x,0)\,dx
			\right|
			\le
			\|\Del g\|_{L^\infty(\T^n)} .
		\end{equation}
		For the terminal term, integration by parts on \(\T^n\) yields
		\begin{equation}\label{eq:avg-terminal-boundary}
			\begin{aligned}
				\left|
				\int_{\T^n}\phi^\ep(x,T)r(x)\,dx
				\right|
				&=
				\left|
				-\int_{\T^n}Du^\ep(x,T)\cdot Dr(x)\,dx
				\right|                                                          \\
				&\le
				C\|Dr\|_{L^1(\T^n)} .
			\end{aligned}
		\end{equation}
		
		It remains to estimate the term involving \(Da\). Let
		\[
		J:=
		\int_0^T\int_{\T^n}
		Da\cdot D\phi^\ep\,\sig^\ep\,dx\,dt .
		\]
		Integrating by parts in space,
		\begin{equation}\label{eq:avg-Da-split}
			J
			=
			-\int_0^T\int_{\T^n}
			\phi^\ep(\Del a)\sig^\ep\,dx\,dt
			-
			\int_0^T\int_{\T^n}
			\phi^\ep Da\cdot D\sig^\ep\,dx\,dt .
		\end{equation}
		The first term on the right-hand side satisfies
		\begin{equation}\label{eq:avg-Da-first}
			\begin{aligned}
				\left|
				\int_0^T\int_{\T^n}
				\phi^\ep(\Del a)\sig^\ep\,dx\,dt
				\right|
				&\le
				C\int_0^T\int_{\T^n}
				|D^2u^\ep|\sig^\ep\,dx\,dt                                      \\
				&\le
				\del
				\int_0^T\int_{\T^n}
				|D^2u^\ep|^2\sig^\ep\,dx\,dt
				+
				C_\del .
			\end{aligned}
		\end{equation}
		For the second term, Lemma~\ref{lem:Da-structure} and \(a\le A^\ep\) imply
		\[
		|Da|^2\le C A^\ep .
		\]
		Thus, by Young's inequality,
		\begin{equation}\label{eq:avg-Da-second-start}
			\begin{aligned}
				\left|
				\int_0^T\int_{\T^n}
				\phi^\ep Da\cdot D\sig^\ep\,dx\,dt
				\right|
				&\le
				\del
				\int_0^T\int_{\T^n}
				|\phi^\ep|^2\sig^\ep\,dx\,dt                                     \\
				&\quad
				+
				C_\del
				\int_0^T\int_{\T^n}
				A^\ep\frac{|D\sig^\ep|^2}{\sig^\ep}\,dx\,dt .
			\end{aligned}
		\end{equation}
		Using \(|\phi^\ep|^2\le n|D^2u^\ep|^2\) and Lemma~\ref{lem:avg-entropy}, we
		obtain
		\begin{equation}\label{eq:avg-Da-second}
			\left|
			\int_0^T\int_{\T^n}
			\phi^\ep Da\cdot D\sig^\ep\,dx\,dt
			\right|
			\le
			\del
			\int_0^T\int_{\T^n}
			|D^2u^\ep|^2\sig^\ep\,dx\,dt
			+
			C_\del
			\left(
			1+\int_{\T^n}r|\log r|\,dx
			\right).
		\end{equation}
		Again, the small constants in Young's inequality and in Lemma~\ref{lem:avg-entropy}
		are chosen successively and then absorbed into the left-hand side. Combining
		\eqref{eq:avg-Da-split}, \eqref{eq:avg-Da-first}, and
		\eqref{eq:avg-Da-second}, we get
		\begin{equation}\label{eq:avg-Da-term-final}
			|J|
			\le
			\del
			\int_0^T\int_{\T^n}
			|D^2u^\ep|^2\sig^\ep\,dx\,dt
			+
			C_\del
			\left(
			1+\int_{\T^n}r|\log r|\,dx
			\right).
		\end{equation}
		
		Substituting \eqref{eq:avg-initial-boundary},
		\eqref{eq:avg-terminal-boundary}, and \eqref{eq:avg-Da-term-final} into
		\eqref{eq:avg-D2-start}, and choosing \(\del>0\) sufficiently small, we
		obtain \eqref{eq:avg-D2}.
	\end{proof}

	We now finish the proof of the averaged estimate.
	
	\begin{proof}[Proof of Theorem~\ref{thm:intro-avg-rate}]
		By \eqref{eq:avg-adjoint-identity},
		\begin{equation}\label{eq:avg-main-weps}
			\int_{\T^n}\partial_\ep u^\ep(x,T)r(x)\,dx
			=
			\int_0^T\int_{\T^n}
			\Del u^\ep\,\sig^\ep\,dx\,dt .
		\end{equation}
		Using \(|\Del u^\ep|\le \sqrt n |D^2u^\ep|\), Cauchy's inequality,
		\eqref{eq:avg-sigma-basic}, and Proposition~\ref{prop:avg-D2}, we obtain
		\[
		\begin{aligned}
			\left|
			\int_{\T^n}\partial_\ep u^\ep(x,T)r(x)\,dx
			\right|
			&\le
			C
			\int_0^T\int_{\T^n}
			|D^2u^\ep|\sig^\ep\,dx\,dt                                      \\
			&\le
			C
			\left(
			\int_0^T\int_{\T^n}
			|D^2u^\ep|^2\sig^\ep\,dx\,dt
			\right)^{1/2}
			\left(
			\int_0^T\int_{\T^n}\sig^\ep\,dx\,dt
			\right)^{1/2}                                                     \\
			&\le
			C
			\left(
			1+\|Dr\|_{L^1(\T^n)}
			+\int_{\T^n}r|\log r|\,dx
			\right)^{1/2}.
		\end{aligned}
		\]
		Applying the preceding estimate with \(\ep\) replaced by \(s\), and with the
		corresponding adjoint density associated with \(u^s\), we obtain, for
		\(0<\eta<\ep\),
		\[
		\begin{aligned}
			\left|
			\int_{\T^n}
			\bigl(u^\ep(x,T)-u^\eta(x,T)\bigr)r(x)\,dx
			\right|
			&\le
			\int_\eta^\ep
			\left|
			\int_{\T^n}\partial_su^s(x,T)r(x)\,dx
			\right|\,ds                                                        \\
			&\le
			C(\ep-\eta)
			\left(
			1+\|Dr\|_{L^1(\T^n)}
			+\int_{\T^n}r|\log r|\,dx
			\right)^{1/2}.
		\end{aligned}
		\]
		Letting \(\eta\to0\), and using the uniform convergence \(u^\eta\to u\), we
		get
		\[
		\left|
		\int_{\T^n}
		\bigl(u^\ep(x,T)-u(x,T)\bigr)r(x)\,dx
		\right|
		\le
		C\ep
		\left(
		1+\|Dr\|_{L^1(\T^n)}
		+\int_{\T^n}r|\log r|\,dx
		\right)^{1/2}.
		\]
		This proves Theorem~\ref{thm:intro-avg-rate}.
	\end{proof}
	
	\section*{Acknowledgments}
	
	The author is deeply grateful to Professor Hung V. Tran for suggesting this
	problem and for his continuous guidance and support throughout the development
	of this work. The author also thanks Professor Wei Cheng for his support during
	the author's undergraduate study and for the opportunity to present this work at
	the Tianyuan meeting.

\end{document}